\theoremstyle{plain}
\newtheorem{theorem}{Theorem}[section]
\newtheorem{lemma}{Lemma}[section]
\theoremstyle{remark}
\theoremstyle{remark}
\newtheorem{remark}{Remark}
\newtheorem{corollary}{Corollary}[section]
\newtheorem{assumption}{Assumption}[section]
\newcommand{\eps}{\varepsilon}
\newcommand{\cA}{\mathcal{A}}
\newcommand{\RR}{\mathbb{R}}
\newcommand{\ZZ}{\mathbb{Z}}
\newtheorem{proposition}{Proposition}
\newcommand{\bbr}{\RR}
\newcommand{\bbz}{\ZZ}
\newcommand{\one}{{\bf 1}}
\newcommand{\Var}{{\rm Var}}
\newcommand{\vep}{\varepsilon}
\newcommand{\eid}{\stackrel{d}{=}}
\newcommand{\interior}[1]{%
  {\kern0pt#1}^{\mathrm{o}}%
}
\begin{document}

\begin{frontmatter}
\title{Clustering of large deviations events in heavy-tailed moving average processes: the catastrophe principle in the short-memory case}
\runtitle{Clustering of large deviations}

\begin{aug}
\author[A]{\fnms{Jiaqi}~\snm{Wang}  \ead[label=e1]{jw2382@cornell.edu}},
\author[B]{\fnms{Gennady}~\snm{Samorodnitsky}\ead[label=e2]{gs18@cornell.edu}}

\address[A]{School of ORIE, Cornell University \printead[presep={ ,\ }]{e1}}

\address[B]{School of ORIE, Cornell University \printead[presep={,\ }]{e2}}
\end{aug}

\begin{abstract}
How do large deviation events in a stationary process cluster? The
answer depends not only on the type of large deviations, but also on
the length of  memory in the process. Somewhat unexpectedly, it may
also depend on the tails of the process. In 
this paper we work in the context of large deviations for partial sums
in moving average processes with short memory and regularly varying
tails. We show that the structure of the large deviation cluster in
this case markedly differs from  the corresponding structure in the
case of exponentially light tails, considered in
\cite{chakrabarty:samorodnitsky:2024}. This is due to the difference
between the ``conspiracy'' vs. the ``catastrophe'' principles
underlying the large deviation events in the light tailed case and the
heavy tailed case, correspondingly. 
\end{abstract}

\begin{keyword}[class=MSC]
\kwd[Primary ]{60F10}
 \end{keyword}

\begin{keyword}
\kwd{large deviations}
\kwd{clustering}
\kwd{infinite moving average}
\kwd{heavy tails}
\kwd{short memory}
\kwd{catastrophe principle}
\end{keyword}

\end{frontmatter}

\section{Introduction}

Certain ``extreme'' events—such as hurricanes, earthquakes, power outages,
or financial crises—occur rarely but may have catastrophic
consequences. In certain systems, these rare events do not occur in
isolation; instead, they may cluster, with multiple rare events
happening close together in 
time. When such clustering occurs, the impact of individual events is
amplified, and this can potentially lead to prolonged disruptions and
cause systemic risks. Understanding how such 
clustering occurs is crucial for assessing risks and developing strategies to
mitigate their  effects.

Probabilistic modeling of such phenomena is naturally based on large
deviation events, the events that occur with vanishingly small
probability as the size of the system grows. 
Large deviation theory provides not only estimates for the 
probabilities of rare events, but it also describes the
``circumstances'' 
that are likely to lead to such events. These ``circumstances'' differ
if the underlying stochastic model has exponentially light tails, and
if it has heavier, subexponential, tails. We will see  
that this may lead to different clustering patterns of large
deviations events in the two cases.  

A bit more specifically (but still very informally), rare events in a 
stochastic model with exponentially light tails tend to exhibit the
``conspiracy principle'': these unlikely events are most likely to
happen because the random inputs driving the model all change their
behavior as if they are sampled from a ``wrong'' distribution - they
``conspire'' to do so. While each individual change may be mild, the
overall effect of the ``conspiracy'' is major. In contrast, rare
events in a 
stochastic model with subexponential tails tend to exhibit the
``catastrophe principle'': while the great majority of the random
inputs driving the model behave in their usual manner, a small number
of these inputs undergo a ``catastrophic'' change, sufficient to cause
the rare event. In many cases a single random input exhibits such
``catastrophic'' change!
We refer the reader to
\cite{dembo:zeitouni:1993} for an exposition of the large deviation
theory in the case of exponentially light tails and to
\cite{denisov:dieker:shneer:2008} for large deviations in the case of subexponential
tails.

 The specific behavior of these random inputs depends on the stochastic model,
and the precise effect of the ``conspiracy'' and ``catastrophe''
principles depends also on the memory in the model. The interplay
between the tails and the memory can be quite intricate.   

One of the main contributions of this paper is to demonstrate that
clustering of large deviation events may be very different if the
underlying mechanism is the ``catastrophe'' phenomenon as opposed to
the ``conspiracy'' phenomenon. Intuitively, the conspiracy phenomenon
is often reflected in a temporary change of the drift, and the rare events
keep happening for as long as that temporary change of the drift
suffices.  In contrast, under the catastrophe phenomenon, clustering
occurs when rare events share a common cause -- often a  single
extreme input-- making the location of the extreme input crucial. One
consequence of this is that in the regime considered in this paper
(the short memory regime),  the cluster of large deviations changes
from finite to infinite when the tails change from exponentially light
to power-like heavy.

The way rare events cluster can be expected to depend further on the
nature of the rare events. 
In this work, we consider finite mean doubly infinite moving average processes
with regularly varying tails, with (properly defined) short
memory. Whenever well-defined (convergent), these processes are
stationary and 
ergodic (even mixing) regardless of their tails. The large deviation
events we consider are the events on  which the sample averages of the
consecutive observations are away from the true mean. This is also the setup of
\cite{chakrabarty:samorodnitsky:2024}, except that the latter work
considered processes with exponentially light tails. Unlike that
paper that studied only univariate models,  our processes are allowed
to take valued in $\bbr^d$ with $d\geq 1$. 

Let
\begin{equation}\label{eq:def_moving_avg}
X_k = \sum_{i = -\infty}^\infty A_i Z_{k-i}, \ \ k\in\bbz, 
\end{equation}
where  $(Z_i)_{i\in \ZZ}$ is a sequence of i.i.d. $d$-dimensional
random vectors (the noise random vectors), and  $(A_i)_{i\in \ZZ}$ are
deterministic $d\times d$ matrices (the coefficient
matrices). In the sequel we will use the notation  $\|A\| = \|A\|_2$
to denote the $\ell_2/\ell_2$ operator norm of a matrix $A$, and all
norms of vectors are also the $\ell_2$ norms.

We assume
 that the noise vectors $(Z_i)$ have zero mean and a  regularly
 varying distribution with exponent $\alpha>1$. That is, there exists
 a Radon measure $\nu$ on $\bar{\RR}^d \setminus \{0\}$ such that  
\begin{equation}\label{eq:def_regular_varying_Rd}
    \frac{P(Z_0\in u\cdot)}{P(\|Z_0\| > u)}\xrightarrow{v} \nu(\cdot)
\end{equation}
(vague convergence) as $u\to \infty$, and  $\nu$ (the tail measure of
the noise) satisfies $\nu(c\Phi)
= c^{-\alpha} \nu (\Phi)$ for all measurable sets $\Phi \subset
\bar{\RR}^d \setminus \{0\}$ and $c>0$. The common notation is $Z_0\in RV(\nu, \alpha)$.

We assume that the coefficient matrices $(A_i)$ satisfy 
\begin{equation}\label{eq:def_coeff1}
    \sum_{i = -\infty}^{\infty} \| A_i \| \coloneqq A_{\text{abs}} <
    \infty \ \ \text{and the matrix} \ \ \mathcal{A} = \sum_{i =
      -\infty}^{\infty} A_i \ \ \text{is invertible.}
\end{equation}
The first condition in \eqref{eq:def_coeff1} is the absolute
summability (of the norms of the coefficients) assumption. It is
clearly sufficient for the a. s. convergence of the series \eqref{eq:def_moving_avg}
defining the infinite moving average process. Weaker summability
conditions on the coefficient norms are also sufficient, however; see
\cite{hult:samorodnitsky:2008}. The absolute summability condition is
usually taken as producing a short memory moving average process. The
second condition in \eqref{eq:def_coeff1}  avoids certain
cancellations corresponding to the null space of the matrix
$\cA$. Such cancellation, when present, can lead to certain negative
dependence phenomena that we wish to avoid in this paper.

Our rare events are defined by a  failure set $\Gamma\subset \bbr^d$, a measurable
set bounded away from the origin, satisfying certain regularity
assumptions; see Assumption \ref{ass:failure.set} below.  Denote 
\begin{equation} \label{e:rare.event}
B_j(n, \Gamma) = \left\{\frac{1}{n} \sum_{i = j}^{n+j - 1} X_i \in
  \Gamma \right\}, \ j\in\bbz.
\end{equation}
By stationarity, for each fixed $n$, the events $(B_j(n, \Gamma))$ have
the same probability and, when $n$ is large, this probability is small
by the law of large numbers. 

We are interested in the clustering of rare events: given that $B_0(n,
\Gamma)$ happens, how many nearby rare events $B_j (n, \Gamma)$ also
happen? In a marked difference from the case of exponentially light
tails considered in \cite{chakrabarty:samorodnitsky:2024}, given $B_0(n,
\Gamma)$, the conditional probability of $B_j(n,
\Gamma)$ converges to 1 as $n\to\infty$, for each fixed
$j$. Therefore, we choose to investigate the size of the cluster of
large deviation events by studying the first time these events do not
happen.  Specifically, let 
\begin{equation} \label{e:first.stop}
J_n^+ = \inf\{j>0: \, B_j(n, \Gamma) \, \text{ does not occur}\},  \ J _n^- = \sup\{j<0: \, B_j(n, \Gamma) \, \text{ does not occur}\}.
\end{equation}
We will investigate the joint limiting distribution of the properly normalized
$(J_n^+,J_n^-)$.

\begin{remark} \label{rk:extensions}
When the 
first condition in \eqref{eq:def_coeff1} fails, the moving average is
sometimes said to be long-range dependent. In that case the large
deviations are expected to cluster in a different manner. When the
tails are exponentially light, this was demonstrated in
\cite{chakrabarty:samorodnitsky:2023} in the one-dimensional
case. When the 
second condition in \eqref{eq:def_coeff1} fails, the moving average is
sometimes said to have negative long-range dependent. How the large
deviations cluster in this case is, to the best of our knowledge,
still unknown event when the tails are exponentially light. We plan to
address these questions for heavy tailed moving average processes in a
subsequent publication. 
\end{remark} 

The following notation and terminology will be used
throughout the 
paper. We denote the cardinality of a set $A$ by $|A|$. We say that two
eventually nonvanishing sequences $(a_n)$, $(b_n)$ are asymptotically
equivalent and write $a_n\sim b_n$ if $a_n/b_n \to 1$ as
$n\to\infty$. We write $a_n \gg b_n$ if $\lim_{n\to \infty} a_n/b_n = \infty$. 

\medskip

The rest of the paper is organized as follows. In
\autoref{sec:main_result}, we present our main results, which are more
general than the discussion so far. To prepare for the proof, we
establish preliminary results in 
\autoref{sec:general_lemma}. The proofs of the propositions are given
in \autoref{sec:proof_prop}, while \autoref{sec:proof_thm1} and
\autoref{sec:proof_thm2} contain the proofs of our two main
theorems. Finally, \hyperref[appn]{Appendix} contains the proofs of
the preliminary lemmas.

\section{Main results}\label{sec:main_result}

We will impose certain assumptions on the 
failure set \(\Gamma\).

\begin{assumption} \label{ass:failure.set}
 The failure set \(\Gamma\)  is a measurable set bounded
away from the origin. That is, for some $\delta_0>0$, $\Gamma \subset
B_0(\delta_0)^c$, where $B_x(r)$ denoted a radius $r$ closed ball in $\bbr^d$
centered at $x$. We assume that 
\begin{equation}\label{eq:def_Gamma} 
  (\nu\circ \mathcal{A}^{-1})(\Gamma) = \nu(\mathcal{A}^{-1} \Gamma)
  >0, \   (\nu\circ \mathcal{A}^{-1})( \partial \Gamma) = \nu(\partial (\mathcal{A}^{-1} \Gamma)) = 0.
\end{equation}
\end{assumption}

We will see in the sequel  that the first assumption in \eqref{eq:def_Gamma} guarantees that a single
noise vector is involved into ``the catastrophe'' causing the rare
event \( B_0(n, \Gamma) \).
If this assumption fails, multiple noise
vectors may be needed. The second assumption in \eqref{eq:def_Gamma}
is a continuity assumption, that simplifies the statements of our
results.

Our first result can be viewed as precisely demonstrating the action
of the ``catastrophe principle'' in the situation considered in this
paper. The random inputs into the system are the noise random vectors
$(Z_i)_{i\in \ZZ}$, and Theorem \ref{thm:ptprocess} shows that, under
its assumptions, when
the rare event \( B_0(n, \Gamma) \) occurs, exactly one noise 
vector causes it. 

Let $M$ be the space of all Radon measures on
$E=\bbr\times \bigl([-\infty,\infty]^d\setminus \{0\}\bigr)$, equipped with the topology of vague
convergence. We formulate our first result in the language of weak
convergence in $M$ or, even more precisely, weak convergence in $M_p$,
the vaguely closed subset of $M$ consisting of Radon point processes. 
We refer the reader to \cite{resnick:1987}
for more information on weak convergence in the vague topology.

\begin{theorem}\label{thm:ptprocess} 
Suppose that the set $\Gamma$
satisfies Assumption \ref{ass:failure.set}.   Consider a sequence of point processes on $E$ defined by
\[
N_n=\sum_{i = -\infty}^\infty \epsilon_{(i/n, \, \mathcal{A}Z_i/n)}, \ n=1,2,\ldots.
\]
Then the conditional law of $N_n$ given 
 \( B_0(n, \Gamma) \) converges weakly, as \( n \to \infty \),  to the
 law of a single point point process \( \epsilon_{(U, X)} \) where $U$
 is standard uniform, and 
$X$ is an independent of $U$ random vector in $\bbr^d$, with the law 
\[ 
P(X\in \cdot) = 
 \frac{\nu(\mathcal{A}^{-1}(\Gamma \cap \cdot))}{\nu(\mathcal{A}^{-1} \Gamma)}.\]
\end{theorem}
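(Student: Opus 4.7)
My plan is to verify the weak convergence of the conditional law $\mathcal{L}(N_n\mid B_0(n,\Gamma))$ by passing to the limit in expectations of the form $E[F(N_n)\mathbf{1}_{B_0(n,\Gamma)}]/P(B_0(n,\Gamma))$ for bounded continuous functionals $F$ on $M_p$ whose value depends only on the restriction of the configuration to a compact set of the form $[-T,T]\times\{y:\|y\|\geq\eta\}$. The underlying catastrophe principle is that exactly one noise vector $Z_{i_0}$ with $i_0\in\{0,\ldots,n-1\}$ is responsible for the event $B_0(n,\Gamma)$, and $\mathcal{A}Z_{i_0}/n$ lands in $\Gamma$.

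\textbf{Step 1 (localization to a single spike).} Fix small $\eta>0$ and large $T>0$ and set $I_n=\{i\in[-Tn,Tn]:\|Z_i\|>\eta n\}$. Since $\alpha>1$, $E|I_n|\sim 2T\eta^{-\alpha}n^{1-\alpha}L(n)\to 0$, so $P(|I_n|\geq 2)=o(P(|I_n|=1))$. A more delicate estimate is needed for the ``no spike'' event $\{|I_n|=0\}$: the partial sum decomposes as $S_n=\sum_k C_{n,k}Z_k$ with $C_{n,k}=\sum_{j=0}^{n-1}A_{j-k}$, and $\|C_{n,k}\|\leq A_{\text{abs}}$ by short memory. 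Centered Chebyshev bounds for $\alpha>2$ and Fuk--Nagaev--type bounds for $1<\alpha\leq 2$, applied to the truncated sum $\sum_k C_{n,k}Z_k\mathbf{1}_{\|Z_k\|\leq \eta n}$, then show this sum hits $n\Gamma$ with probability $o(n^{1-\alpha}L(n))$. Together with a separate estimate that spikes outside $[-Tn,Tn]$ contribute negligibly (using absolute summability of $(A_i)$ for $T$ large), this reduces $P(B_0(n,\Gamma))$ asymptotically to contributions from $\{|I_n|=1\}$.

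\textbf{Step 2 (asymptotics of the single-spike contribution).} By absolute summability, $C_{n,\lfloor un\rfloor}\to\mathcal{A}$ for $u\in(0,1)$ and $C_{n,\lfloor un\rfloor}\to 0$ for $u\notin[0,1]$. Given a single spike at $i_0$, decompose $S_n/n=C_{n,i_0}Z_{i_0}/n+V_n^{(i_0)}/n$, where $V_n^{(i_0)}=\sum_{k\neq i_0}C_{n,k}Z_k$. The tail estimates from Step 1 give $V_n^{(i_0)}/n\to 0$ in probability, uniformly in $i_0$. Hence, up to negligible error,
\[
B_0(n,\Gamma)\cap\{|I_n|=1\}\ \approx\ \{C_{n,i_0}Z_{i_0}/n\in\Gamma\}\cap\{|I_n|=1\}.
\]
Using the regular variation relation $E[g(Z_0/n)]\sim P(\|Z_0\|>n)\,\nu(g)$, valid for bounded continuous $g$ with support in $\{\|y\|>\eta\}$, for each $i_0$ and summing,
\[
E[F(N_n)\mathbf{1}_{B_0(n,\Gamma)}]\sim P(\|Z_0\|>n)\sum_{i_0=-Tn}^{Tn}\int F(\epsilon_{(i_0/n,\mathcal{A}y)})\mathbf{1}_{C_{n,i_0}y\in\Gamma}\,\nu(dy).
\]
Indices $i_0\notin[0,n-1]$ contribute $0$ in the limit because $C_{n,i_0}y\to 0\notin\Gamma$; the assumption $\nu(\partial\mathcal{A}^{-1}\Gamma)=0$ lets us replace $\mathbf{1}_{C_{n,i_0}y\in\Gamma}$ by $\mathbf{1}_{y\in\mathcal{A}^{-1}\Gamma}$ in the limit; the Riemann sum in $i_0/n\in(0,1)$ then converges to $n\int_0^1(\cdot)\,du$.

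\textbf{Step 3 (assembly and main difficulty).} Taking $F\equiv 1$ first yields $P(B_0(n,\Gamma))\sim n\,P(\|Z_0\|>n)\,\nu(\mathcal{A}^{-1}\Gamma)$; dividing the general expression by this and substituting $x=\mathcal{A}y$ gives
\[
E[F(N_n)\mid B_0(n,\Gamma)]\longrightarrow\int_0^1\int_\Gamma F(\epsilon_{(u,x)})\,\frac{(\nu\circ\mathcal{A}^{-1})(dx)}{(\nu\circ\mathcal{A}^{-1})(\Gamma)}\,du=E[F(\epsilon_{(U,X)})].
\]
The main obstacle is the no-spike estimate in Step 1: ruling out that a conspiracy of moderately large but sub-threshold noises can cause $B_0(n,\Gamma)$ requires tail bounds sharp to order $o(n^{1-\alpha}L(n))$, and the correct bound depends qualitatively on whether $\alpha>2$ or $1<\alpha\leq 2$. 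A secondary difficulty is controlling the boundary regions $i_0/n\to 0,1$, where $C_{n,i_0}$ interpolates between $\mathcal{A}$ and $0$; this is handled by showing the contribution from a thin boundary layer vanishes as $n\to\infty$.
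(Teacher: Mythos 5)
Your proposal follows essentially the same route as the paper: localize to a single noise spike (the catastrophe principle, here the content of Proposition \ref{prop:equiProb}), decompose over the location $i_0$ of that spike, identify the per-index contribution via regular variation, and pass to a Riemann sum in $i_0/n$. The paper's proof is shorter only because it delegates the technical estimates to Proposition \ref{prop:equiProb}, Lemma \ref{coro:sum_exp_equi} and Lemma \ref{lemma:equiExp2}, and works with Laplace functionals $E[e^{-N_n(f)}\mid B_0(n,\Gamma)]$ rather than general bounded continuous $F$; these are interchangeable for convergence of Radon point processes. Your $C_{n,k}=\sum_{j=0}^{n-1}A_{j-k}$ is exactly the paper's $\gamma_{kn}$, and your identification that $C_{n,\lfloor un\rfloor}\to\cA$ for $u\in(0,1)$ and $\to 0$ otherwise, combined with the $\nu\circ\cA^{-1}$-continuity of $\Gamma$, is the right way to replace $\mathbf{1}_{C_{n,i_0}y\in\Gamma}$ by $\mathbf{1}_{\cA y\in\Gamma}$ in the limit.

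There is, however, one concrete gap. In Step 1 you claim that \emph{centered Chebyshev bounds for $\alpha>2$} suffice to show that the truncated sum $\sum_k C_{n,k}Z_k\mathbf{1}(\|Z_k\|\leq\eta n)$ reaches $n\Gamma$ with probability $o(n^{1-\alpha}L(n))$. This is not true: for $\alpha>2$ the truncated sum has variance of order $n$, so Chebyshev gives $P(\cdot)\lesssim 1/n$, which for $\alpha>2$ is $\gg n^{1-\alpha}L(n)$ rather than $o(n^{1-\alpha}L(n))$. Higher-moment Markov bounds do not help either, since regular variation with exponent $\alpha$ only gives moments of order $<\alpha$, while one would need order $>2(\alpha-1)>\alpha$. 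What is required (in \emph{both} regimes, not only $1<\alpha\leq 2$) is a concentration inequality that exploits the bounded increments of the truncated variables and gives decay $(\,\cdot\,)^{-\delta/(8\tau)}$ with a freely adjustable truncation level $\tau$, so that the exponent can be made larger than $\alpha-1$. This is precisely what the paper's Prokhorov-type inequality (Lemma \ref{lemma:Concentration}, packaged in Lemma \ref{lemma:WLLN_bounded}) delivers. With that substitution your argument goes through; without it, the ``no conspiracy of medium noises'' step fails for $\alpha>2$, and is at best borderline for $1<\alpha\leq 2$ unless you let $\eta\to 0$ after $n\to\infty$, which you do not do.
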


\begin{remark} \label{rk:unif.time}

In addition to establishing that, for large $n$, a single noise vector is likely to drive the rare event \( B_0(n, \Gamma) \), Theorem \ref{thm:ptprocess} also demonstrates that the location of this vector is approximately uniformly distributed over the interval $(o(n), n + o(n))$. This behavior is characteristic of the short memory regime defined by \eqref{eq:def_coeff1}. Intuitively, short memory ensures that no individual input $Z_i$ has oversized overall
influence: the inputs in the above interval enter the partial sum
$S_n$ with, roughly, the same weight   (equal to $\cA$),  so that they
have, 
roughly, the same chance of causing large deviations
 
\end{remark}

Our second main result answers the question of how large deviations
events $(B_j(n,\Gamma))$ cluster given that the event $B_0(n,\Gamma)$
occurs by describing the limiting distribution of the properly
normalized random vector $(J_n^+,J_n^-)$ defined in
\eqref{e:first.stop}. We state the theorem is a somewhat greater
generality than what is required for this purpose. Denote 
\[S_j^n = \sum_{k = j}^ {n+j-1} X_k, \  \ j\in\bbz,
\] 
so that   $B_j(n, \Gamma) = \{S_j^n\in n \Gamma\}$.
 
\begin{theorem} \label{thm:cluster_size}  
Suppose that the sets $\Gamma$ and
$\Psi$ both satisfy Assumption \ref{ass:failure.set}.  For each \(n\), define  
\[
J_n^{\Psi,+} =  \inf \{ j \geq 0 : S_j^n \in n\Psi^c \}, \ \ J_n^{\Psi,-} =  \sup \{ j <0 : S_j^n \in n\Psi^c \}.
\]  
Then, as \(n \to \infty\), the conditional distribution of
\((J_n^{\Psi,+}/n,\, -J_n^{\Psi,-}/n)\) given \(B_0(n, \Gamma)\)
converges weakly to  the law of a random vector equal to 
\[
  \left\{ \begin{array}{ll}
            (U,1-U) & \ \text{with probability} \ \mu(\Psi), \\
            (0,0) & \ \text{with probability} \ 1-\mu(\Psi),
\end{array} \right. 
\]  
where  $U$ is a standard uniform random variable and 
\[
\mu(\Psi) = \frac{\nu(\mathcal{A}^{-1} (\Psi \cap \Gamma))}{\nu(\mathcal{A}^{-1} \Gamma)}.
\]
In particular, the conditional distribution of
\((J_n^{\Gamma, +}/n,\, -J_n^{\Gamma, -}/n)\) given \(B_0(n, \Gamma)\)
converges weakly to  the law of a random vector $(U,1-U)$. 
\end{theorem}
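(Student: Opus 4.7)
The plan is to exploit Theorem \ref{thm:ptprocess}: conditional on $B_0(n,\Gamma)$, asymptotically a single extreme noise vector $Z_{i^*}$ dominates the system, with $i^*/n$ converging to a uniform $U$ on $[0,1]$ and $\cA Z_{i^*}/n$ converging to $X$, independent of $U$ and distributed as $\mu$ on $\cA^{-1}\Gamma$. Granted that $S_j^n/n$ is, to leading order, equal to $\cA Z_{i^*}/n$ when $i^*$ lies in the window $[j,n+j-1]$ and negligible otherwise, the limiting behavior of $(J_n^{\Psi,+},J_n^{\Psi,-})$ is read off by tracking when $i^*$ enters and leaves the sliding window.

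The main technical step is to upgrade Theorem \ref{thm:ptprocess} to a statement about partial sums. Let $i^*=i_n^*$ be the index maximizing $\|Z_i\|$ over $|i|\leq Kn$ for a large constant $K$. I would show that, conditionally on $B_0(n,\Gamma)$ and along any sequence $j=j_n$ with $j_n/n$ bounded,
\[
\frac{S_{j_n}^n}{n} \;-\; \mathbf{1}\bigl(j_n \leq i^* \leq n+j_n-1\bigr)\,\frac{\cA Z_{i^*}}{n} \;\xrightarrow{P}\; 0,
\]
with uniformity in $j_n$. This splits into two ingredients: (i) when $i^*$ is well inside (resp.\ well outside) the window, the exact weight of $Z_{i^*}$ in $S_{j_n}^n$ is $\sum_{i=j_n-i^*}^{n+j_n-1-i^*} A_i$, which tends to $\cA$ (resp.\ to $0$) by the absolute summability in \eqref{eq:def_coeff1}; (ii) the contribution of the non-extreme noises $\{Z_k\}_{k\neq i^*}$ to $S_{j_n}^n/n$ is uniformly $o(1)$, through a uniform law of large numbers for the stationary, integrable moving average (valid because $\alpha>1$ and $\sum_i\|A_i\|<\infty$) together with a regular-variation bound ruling out a second noise vector of order $n$ inside the relevant window, since $P(\|Z_0\|>cn)^2 = o(P(B_0(n,\Gamma)))$.

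Granted this representation, the remaining analysis is essentially deterministic bookkeeping. On the event $\{X\in\Psi^{\circ}\}$, the displayed asymptotics give $S_j^n/n\in\Psi$ if and only if $j\in[i^*-n+1,\,i^*]$ (in the limit); once $j$ exits this interval the partial sum tends to $0$, which is outside $\Psi$ because $\Psi$ is bounded away from the origin by Assumption \ref{ass:failure.set}. Hence $J_n^{\Psi,+}/n \to U$ and $-J_n^{\Psi,-}/n \to 1-U$. On the event $\{X\in(\Psi^c)^{\circ}\}$, the convergence $S_0^n/n\to X\notin\Psi$ forces $J_n^{\Psi,+}=0$; and $|S_{-1}^n-S_0^n|=\|X_{-1}-X_{n-1}\|=o(n)$ yields $S_{-1}^n/n\notin\Psi$, giving $J_n^{\Psi,-}=-1$ and $-J_n^{\Psi,-}/n\to 0$. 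The boundary condition $\nu(\partial\cA^{-1}\Psi)=0$ makes $\{X\in\partial\Psi\}$ a null event, so these two cases exhaust full conditional probability.

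Finally, Theorem \ref{thm:ptprocess} provides the conditional joint convergence $(i^*/n,\cA Z_{i^*}/n)\Rightarrow(U,X)$ with $U\perp X$ and $P(X\in\Psi)=\mu(\Psi)$; combined with the deterministic reading off above this yields the claimed mixture limit, and the second assertion is the special case $\Psi=\Gamma$, where $\mu(\Gamma)=1$. The principal obstacle is the uniformity in the second paragraph: the point-process limit alone describes only scaled atoms, so a quantitative estimate—most naturally a union bound over a dyadic grid of $j$-values, combined with the tail and truncation bounds indicated above—is required to preclude spurious excursions of $S_j^n/n$ across $\partial\Psi$ at times away from the deterministic transitions $j\approx i^*$ and $j\approx i^*-n+1$.
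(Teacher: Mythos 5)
Your high-level picture is correct and matches the paper's intuition: conditionally on $B_0(n,\Gamma)$, a single index $i^*\approx Un$ carries a jump $\cA Z_{i^*}/n\approx X\sim\mu$, the window $[j,n+j-1]$ ``sees'' this jump exactly for $j\in[i^*-n+1,i^*]$, and the two cases $X\in\Psi$ / $X\notin\Psi$ produce the claimed mixture. The deterministic bookkeeping in your third paragraph, and the use of the boundary condition $\nu(\partial\cA^{-1}\Psi)=0$, are sound.

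However, the proposal has a genuine gap precisely where you flag one, and the resolution you sketch is unlikely to close it. You need the approximation
\[
\sup_{-\lfloor n\theta_2\rfloor< j<\lceil n\theta_1\rceil}\Bigl\|\tfrac{1}{n}S_j^n-\mathbf{1}(j\le i^*\le n+j-1)\tfrac{1}{n}\cA Z_{i^*}\Bigr\| \xrightarrow{P}0
\]
\emph{conditionally} on $B_0(n,\Gamma)$. Your suggested route is a union bound over a grid of $j$-values, but the individual conditional probabilities $P(\|R_{j}\|>\eps\mid B_0(n,\Gamma))$ are only known to be $o(1)$; there is no rate available that survives a union bound over order-$n$ (or even order-$1/\eps$) grid points. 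Moreover, the conditioning on a rare event biases the law of the non-extreme $Z_l$'s, so the unconditional ergodic theorem cannot be applied directly to the residual $R_j$ inside the conditional probability. Your step (ii), as written, conflates the conditional and unconditional settings.

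The paper circumvents this entirely by never attempting a conditional path-wise maximal estimate. Instead it converts every statement about the sliding-window intersection $\bigcap_j B_j(n,\Psi)$ into an \emph{unconditional} symmetric-difference estimate. The key tool is Corollary \ref{c:range}, which shows that
$\{S_j^n\in n\Gamma,\ 0\le j\le n\theta\}$ and $\{\exists\, i\in\{\lfloor n\theta\rfloor,\dots,n-1\}:\cA Z_i\in n\Gamma\}$
have symmetric difference of probability $o(nP(\cA Z_0\in n\Gamma))$, and it is in \emph{this unconditional} setting that the ergodic theorem supplies the uniform bound $P(\sup_{0\le j\le n\theta}\|S_j^n\|>n\eps/4)\to 0$. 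The paper then decomposes $P(J_n^{\Psi,+}/n\ge\theta_1,\,J_n^{\Psi,-}/n\le-\theta_2\mid B_0(n,\Gamma))$ by whether the auxiliary sum $T=\cA\sum_{i=\lfloor n\theta_1\rfloor}^{\lfloor n(1-\theta_2)\rfloor}Z_i$ lies in $n\Psi$, computes the probability of $\{T\in n\Psi\}$ via Theorem \ref{thm:ptprocess} and the continuous mapping theorem (yielding $(1-\theta_1-\theta_2)\mu(\Psi)$), and shows the remaining conditional factors converge to the right constants via Proposition \ref{prop:equiProb}, Corollary \ref{c:range} and Lemma \ref{l:elem.fact} — never invoking a conditional maximal inequality. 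This event-level machinery is what you would need to develop in place of the union-bound sketch; without it, the second paragraph of your plan is not established, and the bookkeeping that follows rests on an unproved premise.

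A minor additional point: defining $i^*$ as the argmax of $\|Z_i\|$ over $|i|\le Kn$ is natural, but linking this argmax to the single atom in Theorem \ref{thm:ptprocess}'s limit requires a short argument (there could a priori be a comparably large $Z_i$ outside the window $[0,n-1]$, or the argmax might not be the index that triggers $B_0(n,\Gamma)$). The paper avoids this by always working with the event $\{\exists\, i:\cA Z_i\in n\Gamma\}$ rather than with an argmax.
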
  

\begin{remark} \label{rk:spec.psi}
 The last statement of Theorem \ref{thm:cluster_size} easily follows
 from its general statement by taking  $\Gamma = \Psi$,  since
 $\mu(\Gamma)=1$.   

The scenario described by the theorem is, once again, a reflection of
the ``catastrophe principle''. The event \(B_0(n, \Gamma)\) occurs due
to a single extraordinary noise vector. This noise vector, nearly on
its own, 
may place $S_0^n$ into $n\Psi$. If it does, then $S_j^n$ will stay in
$n\Psi$ for a roughly linear number of $j$, depending on the location
of the extraordinary noise vector. If it does not, no $S_j^n$ is
likely to be in $n\Psi$. 
 \end{remark}  

We close this section with stating a key proposition making very
explicit how the ``catastrophe principle'' works, by exhibiting a
number of key equivalent quantities resulting from this
principle. The equivalence of these quantities will be used repeatedly
in our arguments. The proposition and its argument use the following
representation of the sums $S_j^n$, which follows directly from
\eqref{eq:def_moving_avg}:
\begin{equation} \label{e:S0n}
  S_j^n = \sum_{i = -\infty}^\infty \gamma_{i-j, n} Z_i, \ \ \gamma_{in} =
  \sum_{k=-i}^{n-i-1} A_k, \ i=\ldots, -1,0,1,2,\ldots.
\end{equation} 
 
\begin{proposition}\label{prop:equiProb} 
Let \( M, \delta \) be positive constants, and let  \( I_n\subset  \{0
, \dots, n - 1\}, \, n=1,2,\ldots \) be such that $|I_n|\to\infty$. 
Suppose that the set $\Gamma$
satisfies Assumption \ref{ass:failure.set}. Then, as \( n \to \infty \), the following
quantities are asymptotically equivalent:  
\begin{enumerate}
 \item \( (|I_n|/n) P( S_0^n\in n\Gamma) \)
    \item \( |I_n| P(\mathcal{A} Z_0\in n\Gamma) \)
    \item \( P(\text{there is} \  i\in I_n \ \text{with} \  \mathcal{A}Z_i \in n\Gamma) \)
    \item \( P(S_0^n\in n\Gamma  \ \text{and there is} \  i\in I_n \ \text{with} \  \mathcal{A}Z_i \in n\Gamma) \)
    \item \( P( S_0^n\in n\Gamma,   \ \text{there is} \  i\in I_n \
      \text{with} \  \mathcal{A}Z_i \in n\Gamma \ \text{and for all}
      \  i\not=l  \in
      [-n{M}, n{M}],    \|Z_l\| \leq n\delta ). 
\)
\end{enumerate}

In particular,
\begin{equation} \label{e:LRprob}
\lim_{n\to \infty} \frac{P( S_0^n \in n\Gamma )}{n P( \mathcal{A}Z_0\in n\Gamma)} = 1.
\end{equation}
\end{proposition}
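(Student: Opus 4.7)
The plan is to establish the asymptotic equivalences in the chain (2) $\sim$ (3) $\sim$ (4) $\sim$ (5), and then deduce (1) $\sim$ (2) by showing the classical tail asymptotic $P(S_0^n \in n\Gamma) \sim n p_n$, where $p_n \coloneqq P(\mathcal{A} Z_0 \in n\Gamma)$. By the regular variation of $Z_0$ with index $\alpha > 1$ and the invertibility of $\mathcal{A}$, one has $p_n \sim P(\|Z_0\|>n)\,\nu(\mathcal{A}^{-1}\Gamma)$, which is of order $n^{-\alpha}L(n)$ for some slowly varying $L$; in particular $|I_n| p_n \leq n p_n \to 0$, a fact I use repeatedly.

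The two easy links are (2) $\sim$ (3) and (4) $\sim$ (5). For the first, independence of the $Z_i$ gives (3) $= 1 - (1 - p_n)^{|I_n|} = |I_n|p_n(1+o(1))$. For the second, $(4)\setminus(5)$ is contained in the event that there exist $i \in I_n$ with $\mathcal{A}Z_i \in n\Gamma$ and some $l \in [-nM, nM]\setminus\{i\}$ with $\|Z_l\|>n\delta$; a union bound together with independence yields
\[
(4) - (5) \leq |I_n|\, p_n \cdot (2nM+1)\, P(\|Z_0\| > n\delta) = |I_n|p_n\cdot O(n^{1-\alpha}) = o(|I_n| p_n).
\]

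The heart of the proof is (3) $\sim$ (4). Since (4) $\leq$ (3), it suffices to bound $(3) - (4) \leq \sum_{i \in I_n} P(\mathcal{A}Z_i \in n\Gamma,\, S_0^n \notin n\Gamma)$. I split $I_n = I_n^{\mathrm{int}} \cup I_n^{\mathrm{bd}}$, with $I_n^{\mathrm{int}} = I_n \cap [L_n, n-L_n]$ for some $L_n \to \infty$, $L_n = o(|I_n|)$; the boundary piece contributes $|I_n^{\mathrm{bd}}| p_n = o(|I_n| p_n)$. For $i \in I_n^{\mathrm{int}}$ I use the decomposition $S_0^n = \gamma_{i,n}Z_i + \widetilde{S}_i$, where $\widetilde{S}_i = \sum_{k \neq i}\gamma_{k,n}Z_k$ is independent of $Z_i$. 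Absolute summability of $(A_j)$ forces $\|\gamma_{i,n} - \mathcal{A}\| \to 0$ uniformly in $i \in I_n^{\mathrm{int}}$, and the conditional law of $Z_i/n$ given $\mathcal{A}Z_i \in n\Gamma$ converges weakly to $\nu(\,\cdot\,\cap\mathcal{A}^{-1}\Gamma)/\nu(\mathcal{A}^{-1}\Gamma)$. Meanwhile $\widetilde{S}_i/n \to 0$ in probability, uniformly in $i$, since $S_0^n/n \to 0$ by the ergodic theorem for the stationary $(X_k)$ and $\gamma_{i,n}Z_i/n \to 0$ in $L^1$ uniformly (using $E\|Z_0\| < \infty$, which is guaranteed by $\alpha > 1$); the independence of $\widetilde{S}_i$ from $Z_i$ preserves this convergence under the conditioning. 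Combining these ingredients gives $S_0^n/n = \mathcal{A}Z_i/n + o_P(1)$ under the conditioning, and the continuity assumption $\nu(\partial \mathcal{A}^{-1}\Gamma) = 0$ permits a Portmanteau argument to conclude $P(S_0^n \notin n\Gamma \mid \mathcal{A}Z_i \in n\Gamma) \to 0$ uniformly on $I_n^{\mathrm{int}}$, as required.

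The final step (1) $\sim$ (2) is equivalent to $P(S_0^n \in n\Gamma) \sim n p_n$. Applying the chain (2) $\sim$ (3) $\sim$ (4) with $I_n = \{0, \ldots, n-1\}$ and using $(4) \leq P(S_0^n \in n\Gamma)$ yields the lower bound $P(S_0^n \in n\Gamma) \geq n p_n (1+o(1))$. The matching upper bound rests on the ``no-conspiracy'' estimate
\[
P\bigl(S_0^n \in n\Gamma,\ \max_{|i|\leq nK}\|Z_i\| \leq n\eta\bigr) = o(n p_n)
\]
for suitably large $K$ and small $\eta$, a Nagaev-type concentration bound for heavy-tailed moving averages whose $\mathbb{R}^d$ version is essentially the main result of \cite{hult:samorodnitsky:2008}. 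The principal technical hurdle is the uniform control of $\widetilde{S}_i/n$ under the conditioning in the preceding step; once that is secured, the rest of the argument amounts to regular-variation bookkeeping.
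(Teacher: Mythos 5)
Your chain $(2)\sim(3)\sim(4)\sim(5)$ is sound and is organized differently from the paper: the paper first proves $(1)\sim(2)$ directly and then derives the others from it, whereas you establish $(2)\sim(3)\sim(4)\sim(5)$ first and try to bootstrap $(1)\sim(2)$ at the end. The $(2)\sim(3)$ Bonferroni step and the $(4)\sim(5)$ union bound match the paper exactly. Your $(3)\sim(4)$ argument (boundary trimming, decomposition $S_0^n=\gamma_{in}Z_i+\widetilde S_i$, independence, uniform convergence $\gamma_{in}\to\cA$ on the interior, and a Portmanteau/Slutsky step using $\nu(\partial\cA^{-1}\Gamma)=0$) is essentially the paper's argument, just stated more compactly.

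The genuine gap is in the final step $(1)\sim(2)$. The lower bound via $(4)\le(1)$ with $I_n=\{0,\dots,n-1\}$ is fine. For the upper bound, your ``no-conspiracy'' estimate
\[
P\bigl(S_0^n\in n\Gamma,\ \max_{|i|\le nK}\|Z_i\|\le n\eta\bigr)=o\bigl(nP(\cA Z_0\in n\Gamma)\bigr)
\]
is attributed to \cite{hult:samorodnitsky:2008}, but that reference treats a \emph{fixed} coefficient sequence and gives tail asymptotics as $x\to\infty$; it does not give a bound uniform over the triangular array $(\gamma_{in})_{i}$ whose row sums scale like $n$, which is what you need here. Establishing precisely this truncated estimate is the technical heart of the paper's proof: the paper decomposes $S_0^n=C_n+D_n$ with $C_n=\sum_{i=0}^{n-1}\cA Z_i$, invokes the one-jump LDP for the i.i.d.-coefficient sum $C_n$ from \cite{FunctionalLDP}, and then shows $P(\|D_n\|>n\eps)=o\bigl(nP(\cA Z_0\in n\Gamma)\bigr)$ via Lemma \ref{lemma:WLLN_bounded}, which in turn rests on the Prokhorov concentration inequality and the estimate $\sum_i\|\zeta_{in}\|=o(n)$ for the residual coefficients. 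None of this is ``regular-variation bookkeeping''; it is a concentration argument with a truncation-dependent exponent, and it cannot be replaced by citing \cite{hult:samorodnitsky:2008}.

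A secondary, smaller gap: even granting the no-conspiracy estimate, the complementary event $\{S_0^n\in n\Gamma,\ \exists\,|i|\le nK:\|Z_i\|>n\eta\}$ is not immediately $\le nP(\cA Z_0\in n\Gamma)(1+o(1))$; a crude union bound gives a constant depending on $K$ and $\eta$, and you must rerun the analysis from your $(3)\sim(4)$ step (for interior $i$, $\{\|Z_i\|>n\eta,\ S_0^n\in n\Gamma\}$ is approximately $\{\cA Z_i\in n\Gamma\}$; for $i\not\in\{0,\dots,n-1\}$ you need $\gamma_{in}\to0$). This is doable, but you should say so explicitly rather than folding it into ``bookkeeping.'' The cleanest fix is to follow the paper's order: prove $(1)\sim(2)$ first by the $C_n+D_n$ decomposition and the concentration lemma, and then the rest of the chain comes more cheaply.
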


Note that the equivalence of 1., 3. and 4. above implies that
\begin{equation} \label{e:almost.same}
\lim_{n\to \infty} \frac{P\bigl( \{S_0^n \in n\Gamma\}\triangle \{
  \text{there is} \  i\in \{0,\ldots, n-1\} \ \text{with} \  \mathcal{A}Z_i \in n\Gamma\}\bigr)}{n P(
  \mathcal{A}Z_0\in n\Gamma)} = 0. 
\end{equation}

We strengthen this conclusion in the following corollary.

\begin{corollary} \label{c:range}
For any $0\leq \theta<1$, $\Gamma$ satisfying Assumption
\ref{ass:failure.set}, 
\begin{equation} \label{e:almost.same.range}
\lim_{n\to \infty} \frac{P\bigl( \{S_j^n \in n\Gamma, \, 0\leq j\leq n\theta\}\triangle \{
  \text{there is} \  i\in \{\lfloor n\theta\rfloor\ldots, n-1\} \ \text{with} \  \mathcal{A}Z_i \in n\Gamma\}\bigr)}{n P(
  \mathcal{A}Z_0\in n\Gamma)} = 0. 
\end{equation}
\end{corollary}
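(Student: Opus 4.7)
The plan is to control $P(A_n\setminus B_n)$ and $P(B_n\setminus A_n)$ separately, writing $A_n=\{S_j^n\in n\Gamma,\,0\le j\le n\theta\}$, $B_n=\{\exists\,i\in I_n:\mathcal{A}Z_i\in n\Gamma\}$, and $I_n=\{\lfloor n\theta\rfloor,\ldots,n-1\}$. The catastrophe principle captured in Proposition~\ref{prop:equiProb} drives both estimates: on each rare event there is, with overwhelming conditional probability, a single extreme noise vector $Z_{i^*}$, and the membership of each $S_j^n$ in $n\Gamma$ is governed by whether the coefficient $\gamma_{i^*-j,n}$ is close to $\mathcal{A}$.

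For $P(A_n\setminus B_n)$: since $A_n\subset\{S_0^n\in n\Gamma\}\cap\{S_{\lfloor n\theta\rfloor}^n\in n\Gamma\}$, I would apply \eqref{e:almost.same} first to $S_0^n$, and then, via stationarity of the doubly infinite sequence $(Z_i)$, to $S_{\lfloor n\theta\rfloor}^n$. This yields, up to events of probability $o(nP(\mathcal{A}Z_0\in n\Gamma))$, indices $i_1\in\{0,\ldots,n-1\}$ and $i_2\in\{\lfloor n\theta\rfloor,\ldots,n+\lfloor n\theta\rfloor-1\}$ with $\mathcal{A}Z_{i_1},\mathcal{A}Z_{i_2}\in n\Gamma$. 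On $A_n\setminus B_n$ neither $i_1$ nor $i_2$ may lie in $I_n$, forcing $i_1\in\{0,\ldots,\lfloor n\theta\rfloor-1\}$ and $i_2\in\{n,\ldots,n+\lfloor n\theta\rfloor-1\}$; the two ranges are disjoint, so $i_1\ne i_2$. Modulo the error, $A_n\setminus B_n$ is contained in the event that two distinct indices in a window of width $O(n)$ both produce $\mathcal{A}Z\in n\Gamma$. By independence and regular variation this is bounded by $O(n^2 P(\mathcal{A}Z_0\in n\Gamma)^2)=O((nP(\mathcal{A}Z_0\in n\Gamma))^2)$, which is $o(nP(\mathcal{A}Z_0\in n\Gamma))$ because $\alpha>1$ forces $nP(\mathcal{A}Z_0\in n\Gamma)\to 0$.

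For $P(B_n\setminus A_n)$: fix small $\varepsilon>0$, choose $K$ so that $\sum_{|k|>K}\|A_k\|<\varepsilon$, and pick $\delta$ small and $M$ large. Let $I_n^{\mathrm{core}}=\{\lfloor n\theta\rfloor+K,\ldots,n-K-1\}$; the boundary contribution $|I_n\setminus I_n^{\mathrm{core}}|\le 2K$ gives at most $2K\,P(\mathcal{A}Z_0\in n\Gamma)=o(nP(\mathcal{A}Z_0\in n\Gamma))$ in $P(B_n)$. Proposition~\ref{prop:equiProb} (equivalence of (3) and (5)) applied with the subset $I_n^{\mathrm{core}}$ then reduces $B_n$, modulo $o(nP(\mathcal{A}Z_0\in n\Gamma))$, to the event that there is a unique $i^*\in I_n^{\mathrm{core}}$ with $\mathcal{A}Z_{i^*}\in n\Gamma$, every other $Z_l$ with $|l|\le nM$ satisfies $\|Z_l\|\le n\delta$, and $S_0^n\in n\Gamma$. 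For each $j\in[0,n\theta]$ the shift $i^*-j$ lies in the core range of $[0,n-1]$, so $\|\gamma_{i^*-j,n}-\mathcal{A}\|\le 2\varepsilon$; writing $S_j^n=\mathcal{A}Z_{i^*}+(\gamma_{i^*-j,n}-\mathcal{A})Z_{i^*}+R_j^{(n)}$ with $R_j^{(n)}=\sum_{l\ne i^*}\gamma_{l-j,n}Z_l$, moment bounds for $R_j^{(n)}$ (truncation at $n\delta$ for $|l|\le nM$, tail bounds for $|l|>nM$) combined with a union bound over $j$ give $\max_{0\le j\le n\theta}\|R_j^{(n)}\|=o(n)$ with conditional probability tending to one, while $\|Z_{i^*}\|/n$ remains tight. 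The regularity $\nu(\partial(\mathcal{A}^{-1}\Gamma))=0$ then ensures that, conditionally, $\mathcal{A}Z_{i^*}/n$ lies in the $(2\varepsilon\|Z_{i^*}\|/n+o(1))$-interior of $\Gamma$ with probability tending to one as $\varepsilon\to 0$, so $S_j^n\in n\Gamma$ for all $j\in[0,n\theta]$, yielding $P(B_n\setminus A_n)=o(nP(\mathcal{A}Z_0\in n\Gamma))$.

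The main obstacle is the uniform-in-$j$ control of the remainders $R_j^{(n)}$: neither a straight triangle inequality nor a single Chebyshev estimate suffices once one union-bounds over the $\Theta(n)$ values of $j$, so one needs a Rosenthal-type high-moment bound for truncated heavy-tailed sums together with careful tail bookkeeping on the contributions from $|l|>nM$. The remaining steps are routine consequences of Proposition~\ref{prop:equiProb}, stationarity of the process, and the continuity of $\nu$ at $\partial(\mathcal{A}^{-1}\Gamma)$ guaranteed by Assumption~\ref{ass:failure.set}.
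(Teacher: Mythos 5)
The bound on $P(A_n\setminus B_n)$ follows the paper's proof essentially verbatim: embed $A_n$ in $\{S_0^n\in n\Gamma\}\cap\{S_{\lfloor n\theta\rfloor}^n\in n\Gamma\}$, replace each of these by its ``big jump'' characterization via \eqref{e:almost.same}, note that on $A_n\setminus B_n$ the two large indices must fall in disjoint blocks, and kill the resulting double-jump event by independence and $\alpha>1$. This part is fine.

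The treatment of $P(B_n\setminus A_n)$ has a genuine gap, and it sits exactly where you flag it. You reduce the problem to showing $\max_{0\le j\le n\theta}\|R_j^{(n)}\|=o(n)$ (with the appropriate conditioning), observe that a single Chebyshev bound cannot survive a union bound over $\Theta(n)$ values of $j$, and propose a Rosenthal-type high-moment estimate together with careful bookkeeping for $|l|>nM$ — but you neither carry this out nor check that the error it produces is genuinely $o(nP(\mathcal{A}Z_0\in n\Gamma))$ rather than $o(1)$, which is the scale actually required here. The Rosenthal route is not doomed, but it is the hardest part of the corollary and you leave it as an acknowledged obstacle, so the argument as written is incomplete. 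The paper sidesteps this difficulty entirely by a simple algebraic identity: $\sum_{l\ne i}\gamma_{l-j,n}Z_l = S_j^n - \gamma_{i-j,n}Z_i$, so
\begin{equation*}
P\Bigl(\bigl\|\textstyle\sum_{l\ne i}\gamma_{l-j,n}Z_l\bigr\|>n\vep/2 \ \text{for some}\ 0\le j\le n\theta\Bigr)
\le P\bigl(\|S_j^n\|>n\vep/4\ \text{for some}\ 0\le j\le n\theta\bigr)+P\bigl(A_{\mathrm{abs}}\|Z_0\|>n\vep/4\bigr),
\end{equation*}
and the first term vanishes by the ergodic theorem applied to the stationary sequence $(X_k)$ (uniformly in $j=O(n)$), while the second term is trivial. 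No uniform-in-$j$ concentration estimate is needed; the deterministic decomposition plus stationary ergodicity do the whole job. This is the key step you are missing. The remaining machinery you describe — passing to $\Gamma^{-\vep}$, using the core index range, appealing to the $\nu$-continuity of $\cA^{-1}\Gamma$ — is compatible with the paper's argument, but without the ergodic-theorem observation the proof does not close.
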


\begin{remark} \label{rk:equiv}
The conclusion \eqref{e:LRprob} is an immediate consequence of the 
equivalence of 1. and 2. above. Furthermore, by the regular variation
(see \eqref{eq:def_regular_varying_Rd}), it can restated as 
\[
\frac{P(S_0^n\in n\Gamma)}{nP(\|\mathcal{A}Z_0\|\geq n)}\to \frac{\nu(\mathcal{A}^{-1}\Gamma)}{\nu(y:\|\cA y\|\geq 1)},
\]
where $\nu$ is the tail measure of the noise. If 
\(\Gamma\) is not a continuity set with respect to \(\nu \circ
\mathcal{A}^{-1}\), this statement should be reformulated as saying 
\[
\frac{\nu(\cA^{-1}\interior{\Gamma})}{\nu(y:\|\cA y\|\geq 1) }
\leq \liminf_{n\to \infty}  \frac{P(S_0^n\in n\Gamma)}{nP(\|\mathcal{A}Z_0\|\geq n)}
\leq \limsup_{n\to \infty}  \frac{P(S_0^n\in n\Gamma)}{nP(\|\mathcal{A}Z_0\|\geq n)}
\leq \frac{\nu(\cA^{-1}\bar{\Gamma})}{\nu(y:\|\cA y\|\geq 1) }.
\]

\end{remark}

\section{Preliminary calculations}\label{sec:Glemma}\label{sec:general_lemma}

In this section we present a number of preliminary results, stated as
lemmas, needed to prove Proposition \ref{prop:equiProb}, its corollary
and the two 
main theorems. Many of these results shed additional light on the 
``catastrophe principle''. 
Some of the results are stated in the one-dimensional case, and then
applied component-wise in the sequel.  

Recall that for a one-dimensional random variable $Z$, its distribution is said to be regularly varying with index $\alpha > 0$ and if there exists a slowly varying function $L: \RR_+ \to \RR_+$ and a constant $p \in [0,1]$ (with $q = 1 - p$) such that

\begin{equation}\label{eq:tail_balance}
  \lim_{z\to\infty}\frac{ P( Z > z)}{z^{-\alpha} L(z)}= p,  \  \
  \lim_{z\to\infty}\frac{ P(Z <- z)}{z^{-\alpha} L(z)}= q. 
\end{equation}

The following variance estimate for the truncated regularly varying
random variables will be used for in the sequel concentration
inequalities. 

\begin{lemma} \label{lemma:Varbound}  Let $Z$  be
  one-dimensional 
  regularly varying with index $\alpha>0$. For $x>0$ let 
$Z^*_x=Z\one(|Z|\leq x)$ . Then, as a function of $x$, $Var(Z^*_x)$ is
bounded by $E(Z^2)$  if $\alpha>2$ and is regularly varying at infinity with
index $2-\alpha$ if $\alpha \leq 2$. 
\end{lemma}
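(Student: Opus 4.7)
The proof splits naturally into the two regimes $\alpha>2$ and $\alpha\le 2$. In the first regime, regular variation with index $\alpha>2$ forces $E(Z^2)<\infty$ (a standard consequence of Karamata). Since $|Z^*_x|\le |Z|$ pointwise, we have $E((Z^*_x)^2)\le E(Z^2)$, and therefore $\mathrm{Var}(Z^*_x)\le E((Z^*_x)^2)\le E(Z^2)$ uniformly in $x$. This disposes of the easy case.

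For $\alpha\le 2$ the plan is to treat the second moment and the squared mean separately and show that the second moment dominates. The starting identity is
\[
E\bigl((Z^*_x)^2\bigr) = 2\int_0^x u\,P(|Z|>u)\,du \;-\; x^2 P(|Z|>x),
\]
obtained by writing $E((Z^*_x)^2)=\int_0^{x^2}P(Z^2>t,\,|Z|\le x)\,dt$ and substituting $u=\sqrt{t}$. Since $P(|Z|>u)$ is regularly varying at infinity with index $-\alpha$, Karamata's theorem applies to the integral. For $\alpha<2$ it gives $2\int_0^x u P(|Z|>u)\,du\sim \frac{2}{2-\alpha}x^2 P(|Z|>x)$, so
\[
E\bigl((Z^*_x)^2\bigr)\sim \frac{\alpha}{2-\alpha}\,x^2 P(|Z|>x),
\]
which is regularly varying with index $2-\alpha$. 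For $\alpha=2$ the integral $\int_0^x u P(|Z|>u)\,du$ is slowly varying and diverges, and $x^2 P(|Z|>x)/\int_0^x u P(|Z|>u)\,du\to 0$, so $E((Z^*_x)^2)$ is slowly varying, i.e.\ regularly varying of index $0=2-\alpha$.

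It remains to control $(E(Z^*_x))^2$, showing it is negligible compared to $E((Z^*_x)^2)$. I will use $|E(Z^*_x)|\le E(|Z^*_x|) = \int_0^x P(|Z|>u)\,du - x P(|Z|>x)$ and again apply Karamata. When $\alpha>1$, the integral converges, so $(E(Z^*_x))^2$ stays bounded while $E((Z^*_x)^2)$ is regularly varying with positive index $2-\alpha$ (or slowly varying and divergent when $\alpha=2$); the claim follows. When $\alpha<1$, Karamata yields $E(|Z^*_x|)\sim\tfrac{\alpha}{1-\alpha}xP(|Z|>x)$, so $(E(Z^*_x))^2=O(x^2 P(|Z|>x)^2)$ and the ratio with $E((Z^*_x)^2)$ behaves like $P(|Z|>x)\to 0$. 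The case $\alpha=1$ is the boundary and will be the slightly delicate step: here $E(|Z^*_x|)$ is slowly varying, whereas $E((Z^*_x)^2)\sim x^2P(|Z|>x)=xL(x)$ for a slowly varying $L$, so the squared mean is again of smaller order. Combining these, $\mathrm{Var}(Z^*_x)\sim E((Z^*_x)^2)$ in every subcase, which completes the proof.

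The main obstacle is simply the bookkeeping across the sub-regimes $\alpha\in(0,1)$, $\alpha=1$, $\alpha\in(1,2)$, $\alpha=2$, where Karamata's theorem takes different forms and one must verify in each sub-regime both that $E((Z^*_x)^2)$ is regularly varying of the claimed index and that $(E(Z^*_x))^2$ is asymptotically negligible; no single application of Karamata handles all cases uniformly.
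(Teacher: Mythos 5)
Your proof is correct and follows the same route the paper indicates in its one-line argument: the case $\alpha>2$ is immediate from $|Z^*_x|\le|Z|$ and $E(Z^2)<\infty$, while the case $\alpha\le 2$ is handled by Karamata's theorem applied to $E\bigl((Z^*_x)^2\bigr)=2\int_0^x uP(|Z|>u)\,du-x^2P(|Z|>x)$ together with a check that $(E Z^*_x)^2$ is of smaller order. You simply spell out the Karamata computation and the sub-regime bookkeeping that the paper leaves to the reader; the only tiny caveat is that at $\alpha=2$ the integral need not diverge, but if it converges then $\mathrm{Var}(Z^*_x)\to\mathrm{Var}(Z)$, a positive constant, which is trivially slowly varying, so the conclusion is unaffected.
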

\begin{proof}
The statement for $\alpha>2$ is obvious, while the statement for
$\alpha\leq 2$ follows from the Karamata theorem; see
e.g. \cite{resnick:1987}. 
\end{proof}

The following lemma gives useful bounds on the tails of infinite
sums.  
\begin{lemma}  \label{lemma:WLLN_bounded}
Let $(Z_i)$  be i.i.d.  zero mean one-dimensional random variables that are 
  regularly varying with index $\alpha>1$. For each $n=1,2,\ldots$ let
  $\{\beta_{in}\}_{i =
    -\infty}^\infty $ be a sequence satisfying $\sum_{i =
    -\infty}^\infty |\beta_{in}| \leq nL$ for some $L > 0$ independent
  of $n$. If $\alpha\leq 
  2$, assume also that for some $B>0$ we have $|\beta_{in}|\leq B$ for
  all $i,n$. 
(a) \  For any $\delta, \tau> 0$ and $0<\vep\leq 1$, for all $n\geq
n_0(\vep,\tau/B)$ (that may also depend on the distribution of $Z$), we have 
    \[
    P\left( \left| \sum_{i=-\infty}^\infty \beta_{in} Z_i
        \one(|\beta_{in}Z_i|\leq \tau n)\right| > n\delta\right)  \leq
    \left\{ \begin{array} {ll}
    \left( \frac{n^2\tau\delta}{ E(Z_0^2)\sum_{i=-\infty}^\infty
        \beta_{in}^2}\right)^{-\delta/(8\tau)} & \text{if} \
                                                 \alpha>2,\\
       \left(\frac{n^2\tau\delta  B^{\alpha-2-\vep}} {g(\tau n/B) \sum_{i=-\infty}^\infty
        |\beta_{in}|^{\alpha-\vep}}\right)^{-\delta/(8\tau)} & \text{if} \
                                                 1<\alpha\leq 2,
\end{array} \right.
    \]
    where $g$ is regularly varying with exponent $2-\alpha$ and it
    depends only on the law of $Z$ and $\vep$.

(b) \  For any $\delta, \tau> 0$ and $0<\vep\leq 1$, for all $n\geq
n_0(\vep,\tau/B)$ (that may also depend on the distribution of $Z$), we have 
    \begin{align*}
    P\left( \left| \sum_{i=-\infty}^\infty \beta_{in} Z_i
       \right| > n\delta\right)  \leq& \left( \frac{n^2\tau\delta}{ E(Z_0^2)\sum_{i=-\infty}^\infty
                                       \beta_{in}^2}\right)^{-\delta/(8\tau)} \\
                                       +& 2 B^{-\alpha +\vep} P(|Z_0|>n\tau/B) \sum_{i}
     |\beta_{in}|^{\alpha-\vep}
       \end{align*}
       if $\alpha>2$, and
    \begin{align*}
    P\left( \left| \sum_{i=-\infty}^\infty \beta_{in} Z_i
       \right| > n\delta\right)  \leq&     
  \left(\frac{n^2\tau\delta  B^{\alpha-2-\vep}} {g(\tau n/B) \sum_{i=-\infty}^\infty
      |\beta_{in}|^{\alpha-\vep}}\right)^{-\delta/(8\tau)} \\
+& 2 B^{-\alpha +\vep} P(|Z_0|>n\tau/B) \sum_{i}
     |\beta_{in}|^{\alpha-\vep} 
\end{align*}
if $1<\alpha<2$, where $g$ is as in part (a). 
\end{lemma}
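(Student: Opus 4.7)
The plan is to prove part (a) by an exponential (Chernoff/Bennett) inequality applied to the truncated and centered sum, and to deduce part (b) from part (a) together with a simple union bound on the summands that exceed the truncation threshold.

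For part (a), set $W_{in}=\beta_{in}Z_i\,\one(|\beta_{in}Z_i|\leq\tau n)$, so $|W_{in}|\leq\tau n$. First I will dispose of the centering correction: since $EZ_0=0$,
\[
\sum_i EW_{in}=-\sum_i \beta_{in}E\bigl(Z_0\,\one(|Z_0|>\tau n/|\beta_{in}|)\bigr),
\]
and the Karamata tail estimate $|E(Z_0\one(|Z_0|>x))|\leq C\,x^{1-\alpha}L(x)$ (plus Potter's bounds in the case $\alpha\leq 2$) combined with $\sum_i|\beta_{in}|\leq nL$ shows that $|\sum_i EW_{in}|\leq n\delta/2$ whenever the right-hand side of the lemma is non-trivial; otherwise that right-hand side exceeds $1$ and the claim is automatic. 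Next, I will bound the total variance $V_n=\sum_i EW_{in}^2$: for $\alpha>2$, trivially $V_n\leq E(Z_0^2)\sum_i \beta_{in}^2$; for $1<\alpha\leq 2$, Karamata's theorem gives
\[
E\bigl(Z_0^2\,\one(|Z_0|\leq\tau n/|\beta_{in}|)\bigr)\leq C\,(\tau n/|\beta_{in}|)^{2-\alpha}L(\tau n/|\beta_{in}|),
\]
and one application of Potter's bound (using $|\beta_{in}|\leq B$) replaces $L(\tau n/|\beta_{in}|)$ by $L(\tau n/B)$ at the cost of a multiplier $(B/|\beta_{in}|)^{\vep}$, yielding $V_n\leq C\,B^{2-\alpha+\vep}g(\tau n/B)\sum_i|\beta_{in}|^{\alpha-\vep}$ for $n\geq n_0(\vep,\tau/B)$. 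Then Bennett's inequality applied to the centered, $2\tau n$-bounded sum gives
\[
P\!\left(\Bigl|\sum_i(W_{in}-EW_{in})\Bigr|>n\delta/2\right)\leq 2\exp\!\left(-\frac{V_n}{(2\tau n)^2}\,h\!\left(\frac{(2\tau n)(n\delta/2)}{V_n}\right)\right),
\]
with $h(x)=(1+x)\log(1+x)-x\geq \tfrac12 x\log(1+x)$. The exponent $\delta/(8\tau)$ is exactly $(n\delta/2)/(2\cdot 2\tau n)$; the factor $8$ absorbs the two-sided factor, the $n\delta/2$ coming from the centering slack, the $2\tau n$ bound on the centered variables, and the $\tfrac12$ in the lower bound for $h$. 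Substituting the two variance bounds yields the two displayed inequalities.

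For part (b), decompose
\[
\sum_i\beta_{in}Z_i \;=\; \sum_i W_{in} \;+\; \sum_i\beta_{in}Z_i\,\one(|\beta_{in}Z_i|>\tau n),
\]
so that $\{|\sum_i\beta_{in}Z_i|>n\delta\}\subset\{|\sum_i W_{in}|>n\delta\}\cup\{\exists\,i:\,|\beta_{in}Z_i|>\tau n\}$. The first event is bounded by part (a); the second, by a union bound, has probability at most $\sum_i P(|Z_0|>\tau n/|\beta_{in}|)$, and the Potter estimate $P(|Z_0|>\tau n/|\beta_{in}|)\leq 2(|\beta_{in}|/B)^{\alpha-\vep}P(|Z_0|>\tau n/B)$ (valid once $n\geq n_0(\vep,\tau/B)$) produces precisely the explicit second summand in the statement.

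The main obstacle is the heavy-tailed range $1<\alpha\leq 2$: here $E(Z_0^2)=\infty$ and the per-index truncation threshold $\tau n/|\beta_{in}|$ depends on $i$, so the slowly varying $L(\tau n/|\beta_{in}|)$ cannot simply be pulled out of $\sum_i$. Potter's bound is what converts this into a single $L(\tau n/B)$ multiplied by $(B/|\beta_{in}|)^{\vep}$, which is precisely why the exponent $\alpha-\vep$ (rather than $\alpha$) appears in $\sum_i|\beta_{in}|^{\alpha-\vep}$ and why the constant carries the $\vep$-slack $B^{\alpha-2-\vep}$; the hypothesis $n\geq n_0(\vep,\tau/B)$ is exactly the quantitative range of validity for Potter's bound.
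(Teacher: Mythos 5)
Your proof follows essentially the same strategy as the paper: truncate at level $\tau n$, center using $EZ_0=0$ with the observation that the centering slack is at most $n\delta/2$, apply an exponential concentration inequality to the bounded centered sum, estimate its variance via Karamata plus Potter (with the $\vep$-slack appearing exactly where you say it does), and obtain part (b) by a union bound on the event that some $|\beta_{in}Z_i|$ exceeds $\tau n$, again controlled via Potter. The one substantive divergence is the choice of concentration inequality: the paper applies Prokhorov's 1959 inequality, $P(\sum Y_i>t)\leq (ct/\sum\Var Y_i)^{-t/2c}$, which directly produces the stated power-type bound with exponent $\delta/(8\tau)$ once $c=2\tau n$ and $t=n\delta/2$ are plugged in; you instead use Bennett's inequality together with $h(x)\geq\tfrac12 x\log(1+x)$, yielding $2\bigl(1+n^2\tau\delta/V_n\bigr)^{-\delta/(8\tau)}$ — the same exponent, up to a harmless prefactor of $2$ that the lemma as stated does not carry. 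Either route is fine for the uses in the paper, but Prokhorov's form is what makes the constants land exactly as in the statement, so if you want the displayed bound verbatim you should either switch to Prokhorov or note that the extra factor of $2$ is absorbed.

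One small point worth tightening: your sentence ``whenever the right-hand side of the lemma is non-trivial; otherwise that right-hand side exceeds $1$ and the claim is automatic'' is the right idea for dismissing the centering correction, but as written it is not quantitatively airtight — in the case $\alpha>2$ (where no uniform bound $|\beta_{in}|\leq B$ is assumed), the Chebyshev bound $E[|Z|\one(|Z|>x)]\leq EZ^2/x$ gives a centering slack of at most $\tfrac{EZ^2}{\tau n}\sum_i\beta_{in}^2$, and comparing this to $n\delta/2$ requires $\sum_i\beta_{in}^2\leq n^2\tau\delta/(2EZ^2)$, whereas ``RHS $\geq 1$'' only gives $\sum_i\beta_{in}^2\geq n^2\tau\delta/EZ^2$. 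There is a gap by a factor of $2$ (and for the $\alpha\leq 2$ case the slack is controlled differently, via the $B$-bound and $E|Z_0|<\infty$, as the paper does). This is a minor bookkeeping issue and is easily repaired by running the dichotomy with $\delta$ halved; it does not affect the structure of the argument.
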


We now present several useful results in the multivariate case, and 
we return to using the notation of the previous section. 
   The following elementary fact is very useful, and we record it for
repeated reference.

\begin{lemma} \label{l:elem.fact}
  For any $\delta>0$ and $M>0$,
\begin{equation} \label{e:two.big.sum}
\lim_{n\to\infty}\frac{\sum_{\substack{i,j=-Mn, \ldots, Mn\\i\not=j}}
  P(\|Z_i\|>n\delta, \|Z_j\|>n\delta)}{P(S_0^n\in n\Gamma)}=0. 
\end{equation}
\end{lemma}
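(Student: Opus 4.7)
The plan is to exploit the catastrophe principle quantitatively: the numerator measures the probability of two independent ``catastrophes'' among the noise vectors indexed by $|i|\leq Mn$, while (by Proposition \ref{prop:equiProb}) the denominator is, up to constants, the probability of just one such catastrophe in an interval of comparable length. Under regularly varying tails with index $-\alpha$ and $\alpha>1$, the two-big-jump probability should be asymptotically negligible compared to the one-big-jump probability, the discrepancy being of order $nP(\|Z_0\|>n)\to 0$.

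First I would bound the numerator. By the independence of the $(Z_i)$, the summand equals $P(\|Z_0\|>n\delta)^2$ for every $i\neq j$, and the number of ordered pairs $(i,j)$ with $i\neq j$ in $\{-Mn,\ldots,Mn\}$ is $(2Mn{+}1)(2Mn) = O(n^2)$. Hence the numerator is at most $Cn^2 P(\|Z_0\|>n\delta)^2$ for a constant $C=C(M)$. Using the regular variation of $\|Z_0\|$ with index $-\alpha$ in the form $P(\|Z_0\|>n\delta)\sim \delta^{-\alpha}P(\|Z_0\|>n)$, the numerator is asymptotically bounded by a constant times $n^2 P(\|Z_0\|>n)^2$.

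For the denominator I would invoke \eqref{e:LRprob} to get $P(S_0^n\in n\Gamma)\sim nP(\mathcal{A}Z_0\in n\Gamma)$. Since $\mathcal{A}$ is invertible by \eqref{eq:def_coeff1}, the latter equals $P(Z_0\in n\mathcal{A}^{-1}\Gamma)$. The vague convergence \eqref{eq:def_regular_varying_Rd}, together with the continuity condition $\nu(\partial\mathcal{A}^{-1}\Gamma)=0$ from Assumption \ref{ass:failure.set}, then yields $P(Z_0\in n\mathcal{A}^{-1}\Gamma)\sim \nu(\mathcal{A}^{-1}\Gamma)\,P(\|Z_0\|>n)$, and the positivity of $\nu(\mathcal{A}^{-1}\Gamma)$ from the same assumption guarantees that the denominator is of exact order $nP(\|Z_0\|>n)$.

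Combining the two bounds, the ratio in \eqref{e:two.big.sum} is $O(nP(\|Z_0\|>n))$, which tends to $0$ since $P(\|Z_0\|>n)=n^{-\alpha}L(n)$ with $L$ slowly varying and $\alpha>1$. I do not anticipate any real obstacle: the argument is a short tail-comparison. The only point requiring a little care is invoking the continuity condition in Assumption \ref{ass:failure.set} to turn the vague convergence into a genuine asymptotic for $P(\mathcal{A}Z_0\in n\Gamma)$, and making sure the leading constants $\nu(\mathcal{A}^{-1}\Gamma)$ and $\delta^{-2\alpha}$ are harmless factors that play no role in the vanishing of the ratio.
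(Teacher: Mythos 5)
Your argument is correct and coincides with the paper's: both bound the numerator by $O(n^2)P(\|Z_0\|>n\delta)^2$ (regularly varying with exponent $2-2\alpha$), identify the denominator via the equivalence of items 1 and 2 in Proposition \ref{prop:equiProb} as regularly varying with exponent $1-\alpha$, and conclude from $\alpha>1$. You simply spell out the regular-variation bookkeeping more explicitly, which is fine; the only extra point the paper flags, implicitly relevant to your use of \eqref{e:LRprob}, is that the equivalence of 1 and 2 does not rely on this lemma, so there is no circularity.
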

\begin{proof}
It is enough to notice that the numerator in \eqref{e:two.big.sum} is
regularly varying with exponent $2-2\alpha$, while the denominator is,
by the equivalence of 1 and 2 in \autoref{prop:equiProb}, 
regularly varying with exponent $1-\alpha$ (note that this lemma is
not used in the proof of the above equivalence). 
\end{proof}

We now present several results that will be useful for establishing convergence
of the point processes. 
\begin{lemma}  \label{coro:sum_exp_equi}
Let  $f: \mathbb{R}\times\mathbb{R}^d \to \mathbb{R}_+$ be a
nonnegative measurable function, and let $\delta, M$ be positive
constants. For $n\geq 1$ and $i=0,1,\ldots, n-1$ 
consider the events 
\begin{equation} \label{e:event.i}
B^{(i,n)}_{0}(\Gamma) = \bigl\{S_0^n\in
 n\Gamma, \, \cA  Z_i \in n \Gamma, \, \|Z_l\|\leq n\delta \ \text{for
   all} \ l \in [-nM, nM], \, l\not=i\bigr\}.
 \end{equation}
Then, as $n\to\infty$, 
    \begin{align*}
         \frac{E\left[\sum_{i=0}^{n-1}\exp\left(-
      f(i/n, \cA Z_i/n)\right)1_{B^{(i,n)}_{0}(\Gamma)}\right]}{P(S_0^n\in n\Gamma)}
         - 
\frac{ E \left[\sum_{i=0}^{n-1}\exp\left(-
      f(i/n, \cA Z_i/n)\right) 1_{\cA  Z_i\in n\Gamma}\right] }{P(S_0^n\in n\Gamma)} \to 0.
    \end{align*} 
\end{lemma}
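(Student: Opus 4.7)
The plan is to reduce the claim to a purely probabilistic estimate that Proposition \ref{prop:equiProb} already controls. First, I would observe that $B_0^{(i,n)}(\Gamma)\subset\{\cA Z_i\in n\Gamma\}$, so for each $i$ the integrand in the first expectation is pointwise no larger than the integrand in the second. Since $\exp(-f)\in[0,1]$, the (nonnegative) difference of the two expectations is bounded above by
\[
\sum_{i=0}^{n-1}\bigl(P(\cA Z_i\in n\Gamma)-P(B_0^{(i,n)}(\Gamma))\bigr),
\]
and it suffices to show that this quantity is $o\bigl(P(S_0^n\in n\Gamma)\bigr)$.

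Next, by stationarity the upper sum equals $nP(\cA Z_0\in n\Gamma)$, and the equivalence of quantities 1 and 2 in Proposition \ref{prop:equiProb} (applied with $I_n=\{0,\ldots,n-1\}$) gives $nP(\cA Z_0\in n\Gamma)\sim P(S_0^n\in n\Gamma)$. For the lower sum I would sandwich:
\[
P\Bigl(\bigcup_{i=0}^{n-1}B_0^{(i,n)}(\Gamma)\Bigr)\;\leq\;\sum_{i=0}^{n-1}P\bigl(B_0^{(i,n)}(\Gamma)\bigr)\;\leq\;\sum_{i=0}^{n-1}P(\cA Z_i\in n\Gamma).
\]
The key observation is that the union on the left is exactly the event appearing in item 5 of Proposition \ref{prop:equiProb}: the assertion ``there exists $i\in I_n$ with $\cA Z_i\in n\Gamma$ and $\|Z_l\|\leq n\delta$ for all $l\neq i$ in $[-nM,nM]$'' conjoined with $\{S_0^n\in n\Gamma\}$ is, per-$i$, exactly $B_0^{(i,n)}(\Gamma)$. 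Hence by the equivalence of 1 and 5 the union probability is also asymptotic to $P(S_0^n\in n\Gamma)$, and the sandwich forces $\sum_i P(B_0^{(i,n)}(\Gamma))\sim P(S_0^n\in n\Gamma)$ as well.

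Combining these two asymptotics, the dominating difference $\sum_i\bigl(P(\cA Z_i\in n\Gamma)-P(B_0^{(i,n)}(\Gamma))\bigr)$ is the difference of two quantities each equivalent to $P(S_0^n\in n\Gamma)$, hence is $o\bigl(P(S_0^n\in n\Gamma)\bigr)$, which yields the desired convergence to $0$.

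The proof is essentially bookkeeping on top of Proposition \ref{prop:equiProb}, so there is no serious obstacle; the only thing requiring a moment of care is to recognize the event inside quantity 5 of that proposition as precisely $\bigcup_i B_0^{(i,n)}(\Gamma)$ (so that the union bound becomes sharp asymptotically), and to exploit $\exp(-f)\in[0,1]$ to reduce the expectation difference to a probability difference.
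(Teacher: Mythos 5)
Your argument is correct. The central observation — that $\bigcup_{i=0}^{n-1} B_0^{(i,n)}(\Gamma)$ is precisely the event appearing in quantity~5 of Proposition~\ref{prop:equiProb} with $I_n=\{0,\ldots,n-1\}$ — is accurate, and the sandwich
\[
P\Bigl(\bigcup_{i} B_0^{(i,n)}(\Gamma)\Bigr) \;\le\; \sum_{i} P\bigl(B_0^{(i,n)}(\Gamma)\bigr) \;\le\; n\,P(\cA Z_0\in n\Gamma),
\]
with both bounding terms asymptotic to $P(S_0^n\in n\Gamma)$ by the equivalences 1$\leftrightarrow$5 and 1$\leftrightarrow$2, does force $\sum_i\bigl(P(\cA Z_i\in n\Gamma)-P(B_0^{(i,n)}(\Gamma))\bigr)=o(P(S_0^n\in n\Gamma))$. (Minor wording slip: the raw difference of expectations is $\leq 0$, so what you are bounding is its absolute value, but your displayed bound is exactly that.)

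Your route differs from the paper's in the bookkeeping. The paper bounds $E\sum_i |1_{B_0^{(i,n)}(\Gamma)}-1_{\cA Z_i\in n\Gamma}|/P(S_0^n\in n\Gamma)$ by explicitly decomposing $\{\cA Z_i\in n\Gamma\}\setminus B_0^{(i,n)}(\Gamma)$ into two failure modes — $\{S_0^n\notin n\Gamma\}$ and $\{\|Z_l\|>n\delta$ for some $l\neq i\}$ — and then controls the first via an inclusion--exclusion step combined with the 3$\leftrightarrow$4 equivalence and Lemma~\ref{l:elem.fact}, and the second directly via Lemma~\ref{l:elem.fact}. Your version instead exploits the 1$\leftrightarrow$5 equivalence as a single black box through the squeeze argument, which is a bit cleaner and avoids re-invoking Lemma~\ref{l:elem.fact} or inclusion--exclusion at this stage (the relevant cross-term estimates are already internalized in the Proposition's proof of 4$\leftrightarrow$5). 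Both approaches lean on the same underlying machinery; yours just repackages it more economically.
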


\begin{proof}
  We have
  \begin{align*}
&\left|   \frac{E\left[\sum_{i=0}^{n-1}\exp\left(-
      f(i/n, \cA Z_i/n)\right)1_{B^{(i,n)}_{0}(\Gamma)}\right]}{P(S_0^n\in n\Gamma)}
         - 
\frac{ E \left[\sum_{i=0}^{n-1}\exp\left(-
      f(i/n, \cA Z_i/n)\right) 1_{\cA  Z_i\in n\Gamma}\right] }{P(S_0^n\in n\Gamma)} \right|\\
\leq& \frac{E\sum_{i=0}^{n-1} |1_{B^{(i,n)}_{0}(\Gamma)}-1_{\cA  Z_i\in 
      n\Gamma}|}{P(S_0^n\in n\Gamma)} 
\leq \frac{\sum_{i=0}^{n-1} P\bigl(\cA  Z_i\in n\Gamma, S_0^n\notin
 n\Gamma\bigr)}{P(S_0^n\in n\Gamma)} \\
+& \frac{\sum_{i=0}^{n-1} P\bigl(\cA  Z_i\in n\Gamma,  \|Z_l\|> n\delta \ \text{for
   some} \ l \in [-nM, nM], \, l\not=i\bigr)}{P(S_0^n\in n\Gamma)}. 
    \end{align*}
    Note that using inclusion exclusion formula, we have
    \begin{align*}
&\frac{\sum_{i=0}^{n-1} P\bigl(\cA  Z_i\in n\Gamma, S_0^n\notin
 n\Gamma\bigr)}{P(S_0^n\in n\Gamma)} \leq \frac{P(S_0^n\notin n\Gamma
                     \ \text{and there is} \  i=0,\ldots, n-1 \ \text{with} \  \mathcal{A}Z_i \in n\Gamma)}{P(S_0^n\in n\Gamma)}\\
+&\frac{\sum_{\substack{i,j=0,\ldots, n-1\\i\not=j}} P\bigl(\cA  Z_i\in n\Gamma, \cA  Z_j\in n\Gamma, S_0^n\notin
 n\Gamma\bigr)}{P(S_0^n\in n\Gamma)}\to 0
    \end{align*}
 by the equivalence of 3. and 4. in    \autoref{prop:equiProb} and
 Lemma \ref{l:elem.fact}.
 Similarly,
 $$
 \frac{\sum_{i=0}^{n-1} P\bigl(\cA  Z_i\in n\Gamma,  \|Z_l\|> n\delta \ \text{for
   some} \ l \in [-nM, nM], \, l\not=i\bigr)}{P(S_0^n\in n\Gamma)}\to 0
$$
by  Lemma \ref{l:elem.fact}. 
  
\end{proof}

 \begin{lemma}\label{lemma:equiExp2}
Let  $f: \mathbb{R}\times\mathbb{R}^d \to \mathbb{R}_+$ be a
nonnegative continuous function. Then 
\begin{equation} \label{e:ave.cond}
\lim_{n\to\infty} \frac{1}{n} \sum_{i=0}^{n-1}E\bigl[\exp\bigl(-f(i/n,
\cA Z_i/n)\bigr)\big| 
\cA    Z_i \in n\Gamma\bigr] = E[\exp(-f(U, X))], 
\end{equation}
where $U$ is a standard uniform random variable independent of a random
vector $X$ with the law $ \nu\bigl(\cA^{-1} (\cdot \cap
\Gamma)\bigr)/\nu(\cA^{-1}\Gamma)$. 
\end{lemma}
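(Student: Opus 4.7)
My plan is to recognize the Cesaro average in \eqref{e:ave.cond} as an integral against a product probability measure on $[0,1]\times \RR^d$, and then apply weak convergence of measures. Since $(Z_i)$ are i.i.d., the conditional law of $\cA Z_i/n$ given $\cA Z_i\in n\Gamma$ does not depend on $i$; denote it by $Q_n$. Then
\[
\frac{1}{n}\sum_{i=0}^{n-1}E\bigl[\exp\bigl(-f(i/n,\cA Z_i/n)\bigr)\bigm| \cA Z_i\in n\Gamma\bigr]
=\int_{[0,1]\times \RR^d}e^{-f(t,y)}\,d\tilde\mu_n(t,y),
\]
where $\tilde\mu_n:=\bigl(\tfrac{1}{n}\sum_{i=0}^{n-1}\delta_{i/n}\bigr)\otimes Q_n$ is a product probability measure.

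Next I would prove $\tilde\mu_n\Rightarrow \mathrm{Leb}_{[0,1]}\otimes Q$, where $Q$ denotes the prescribed law of $X$. The first marginal converges weakly to $\mathrm{Leb}_{[0,1]}$ by the elementary Riemann approximation of continuous functions. For the second, the regular variation hypothesis \eqref{eq:def_regular_varying_Rd} (transferred to $\cA Z_0$, using invertibility of $\cA$) together with the continuity condition $(\nu\circ\cA^{-1})(\partial\Gamma)=0$ give
\[
Q_n(B)=\frac{P(\cA Z_0\in n(B\cap\Gamma))}{P(\cA Z_0\in n\Gamma)}\longrightarrow \frac{\nu(\cA^{-1}(B\cap\Gamma))}{\nu(\cA^{-1}\Gamma)}=Q(B)
\]
for every continuity set $B$ of $Q$. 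Because $\tilde\mu_n$ factorizes as a product of independent marginals and both marginals converge weakly, the joint convergence $\tilde\mu_n\Rightarrow\mathrm{Leb}_{[0,1]}\otimes Q$ follows from the standard product-convergence lemma for weak convergence of probability measures on Polish spaces.

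Finally, since $f$ is continuous and nonnegative, $e^{-f}$ is bounded by $1$ and continuous on $[0,1]\times \RR^d$, so the portmanteau theorem gives
\[
\int e^{-f}\,d\tilde\mu_n \longrightarrow \int_0^1\!\int_{\RR^d} e^{-f(t,y)}\,dQ(y)\,dt = E\bigl[e^{-f(U,X)}\bigr],
\]
which is the desired limit.

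The main obstacle is ensuring that $Q_n\Rightarrow Q$ holds as weak convergence of probability measures on $\RR^d$ and not merely as vague convergence on $\RR^d\setminus\{0\}$, since the test function $e^{-f(t,\cdot)}$ need not be compactly supported. This reduces to proving uniform tightness of $\{Q_n\}$: using Potter bounds for the regularly varying tail of $\|\cA Z_0\|$ together with the tail asymptotics $P(\cA Z_0\in n\Gamma)\asymp P(\|\cA Z_0\|>n)$ coming from \eqref{e:LRprob}, one obtains $Q_n(\|y\|>R)\leq CR^{-\alpha+\varepsilon}$ uniformly in sufficiently large $n$ for any fixed $\varepsilon\in(0,\alpha-1)$. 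Mass near $0$ is not an issue since $\Gamma$ is bounded away from the origin by Assumption \ref{ass:failure.set}.
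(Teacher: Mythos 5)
Your proposal is correct and follows essentially the same route as the paper's own proof: the paper packages the Cesaro average as $E[\exp(-f(U_n/n, X_{n,U_n}/n))]$ with $U_n$ discrete-uniform on $\{0,\dots,n-1\}$ and $X_{n,i}$ i.i.d.\ with the conditional law, then uses independence and the marginal weak convergences $U_n/n\Rightarrow U$, $X_{n,0}/n\Rightarrow X$; your measure-theoretic formulation via the product measure $\tilde\mu_n$ is the same argument in a different notation. The one thing you add beyond the paper is the explicit tightness discussion for $Q_n$ (the paper treats $X_{n,0}/n\Rightarrow X$ as immediate), which is a legitimate point and is correctly resolved using $\Gamma\subset B_0(\delta_0)^c$ together with $P(\cA Z_0\in n\Gamma)\asymp P(\|\cA Z_0\|>n)$ and Potter bounds.
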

    \begin{proof}
Let $X_{n, i}, \, i=0,\ldots, n-1 $ be independent and identically
distributed random vectors with the law 
$P(X_{n, 0}\in \cdot) = P({\cA Z_0} \in n\cdot|\cA    Z_0\in n
\Gamma)$, and let $U_n$, be an independent of  them random variable
with the discrete uniform distribution on $\{0,\ldots, 
n-1\}$. Then the expression in the left-hand side of
\eqref{e:ave.cond} is simply
$$
E\bigl[\exp\bigl(-f(U_n/n, X_{n, U_n}/n)\bigr)\bigr]. 
$$ 
Since  $U_n /n\Rightarrow U$ and $X_{n, 0}/n\Rightarrow X$, it follows
by the identity of the distributions and independence that $(U_n/n,
X_{n, U_n}/n) \eid (U_n/n,
X_{n, 0}/n) \Rightarrow   (U, X)$, and the claim of the lemma follows. 
\end{proof}

\section{Proof of Proposition \ref{prop:equiProb} and Corollary \ref{c:range}
}\label{sec:proof_prop}
We use
the usual notions of a width-$\epsilon>0$ neighborhood of a set
$\Gamma\subset\RR^d$:  the outer neighborhood defined by
$\Gamma^\epsilon = \{y: dist(\Gamma, y) \leq \epsilon\}$ and the inner
neighborhood defined by  
$\Gamma^{-\epsilon} = ((\Gamma^c)^\epsilon)^c$.
Clearly,
    $\Gamma^{-\epsilon} \subset \Gamma \subset \Gamma^{\epsilon}$, 
  $\Gamma^{\epsilon} \downarrow \bar{\Gamma}$ as $\eps \downarrow 0$
  and  $\Gamma^{-\epsilon} \uparrow \interior{\Gamma}$ as $\eps
  \downarrow 0$. 
We prove first the equivalence of 1. and 2. We represent $S_0^n$ in
the form 
\begin{equation} \label{e:split.S_n}
S_0^n = \sum_{i=0}^{n-1} \cA Z_i + \left( S_n-\sum_{i=0}^{n-1} \cA
  Z_i\right) := C_n+D_n,
\end{equation} 
and we think of $C_n$ as the ``main part'' of the sum and of $D_n$ as
the ``secondary part'' of the sum. 

The behavior of the main part is known. It follows from Theorem 2.1
in \cite{FunctionalLDP} that for any measurable set $\Psi$ that is
bounded away from the origin, 
\begin{equation} \label{e:main.p}
\frac{\nu(\cA^{-1} \Psi^\circ)}{\nu(\cA^{-1}  \Gamma)}\leq  \liminf_{n\to \infty} \frac{P( C_{n }\in n\Psi)}{nP( \mathcal{A}
    Z_0\in n\Gamma)} \leq \limsup_{n\to \infty} \frac{P( C_{n }\in n\Psi)}{nP( \mathcal{A}
    Z_0\in n\Gamma)} \leq \frac{\nu(\cA^{-1} \bar \Psi)}{\nu(\cA^{-1}
    \Gamma)}. 
\end{equation}
 The rest of the argument is designed to show that the secondary
 part is asymptotically negligible. Specifically, we show that for any
 $\epsilon>0$, 
 \begin{equation} \label{e:sec.p}
 \lim_{n\to \infty} \frac{P( \|D_n\| >
   n\epsilon)}{nP( \cA Z_0 \in n\Gamma)} = 0. 
 \end{equation}
The continuity assumption on the set $\Gamma$ shows that the
equivalence of 1. and 2. of the proposition is an immediate conclusion
from \eqref{e:main.p} and \eqref{e:sec.p}.

We can write
$$
D_n = \sum_{i=-\infty}^\infty  \zeta_{in} Z_i
$$
for some $d\times d$ matrices $(\zeta_{in})$. On the event $\{ \|D_n\| >
n\epsilon\}$ there exist $m,k\in \{1,\ldots, d\}$ such that
\begin{equation}
    \label{eq:reduce_1D}
  \left|\sum_{i=-\infty}^\infty  (\zeta_{in})_{mk} (Z_i)_k\right| >
  \frac{n\eps}{d^{3/2}}, 
  \end{equation}
where $(A)_{mk}$ denotes the entry of a matrix
$A$ in position $(m,k)$ and $(z)_k$ denotes the $k$th entry of a vector
$z$. 

We will show that for every $i,m,k$, $|(\zeta_{in})_{m,k}| \leq
A_{abs}$ and for every $m,k$, 
\begin{equation}\label{eq:coef_tot_bdd}
    \sum_i |(\zeta_{in})_{m,k}| \leq \sum_{i} \|\zeta_{in}\| =o(n), \ n\to\infty.
  \end{equation}
 Assuming that this is true, we will  use Lemma \ref{lemma:WLLN_bounded}
with  $B = A_{abs}$. Suppose first that $\alpha>2$. Choose  
$0<\tau<\eps/(8(\alpha-1)d^{3/2})$ and let $0<\theta\leq 1$.  
We have 
\begin{align*}
   & P(\|D_n\|> n\eps) \leq \sum_{m = 1}^d\sum_{k = 1}^d  P\left(
     \left|\sum_{i=-\infty}^\infty  (\zeta_{in})_{mk} (Z_i)_k\right| > 
  \frac{n\eps}{d^{3/2}}\right) \\
    \leq &\sum_{m = 1}^d\sum_{k = 1}^d  \left[\left( \frac{n^2\tau(\eps/d^{3/2})}{ E(Z_0^2)\sum_{i=-\infty}^\infty
     |(\zeta_{in})_{m,k}|^2}\right)^{-\delta/(8\tau)}  
                                       + 2 B^{-\alpha +\theta} P\left( |Z_0|>\frac{n\tau}{B}\right) \sum_{i}
     |(\zeta_{in})_{m,k}|^{\alpha-\theta}\right].
\end{align*}
Note that by \eqref{eq:coef_tot_bdd},
$$
\frac{n^2\tau(\eps/d^{3/2})}{ E(Z_0^2)\sum_{i=-\infty}^\infty
  |(\zeta_{in})_{m,k}|^2}\gg n,
$$
and so by the choice of $\tau$ we have
$$
\left( \frac{n^2\tau(\eps/d^{3/2})}{ E(Z_0^2)\sum_{i=-\infty}^\infty
    |(\zeta_{in})_{m,k}|^2}\right)^{-\delta/(8\tau)} = o\bigl(nP( \cA Z_0 \in n\Gamma)\bigr),
$$
since the expression in the left-hand side 
is dominated by a regularly varying function with exponent smaller
than $-(\alpha-1)$, while the  expression in the right-hand side is
regularly varying   with exponent   $-(\alpha-1)$.

Furthermore, by \eqref{eq:coef_tot_bdd},
$$
\sum_{i}
     |(\zeta_{in})_{m,k}|^{\alpha-\theta} \leq A_{abs}^{1-\theta} \sum_{i}
     |(\zeta_{in})_{m,k}| = o(n),
     $$
     so 
     $$
     P\left( |Z_0|>n\tau/B\right) \sum_{i}
     |(\zeta_{in})_{m,k}|^{\alpha-\theta}= o\left( nP( \cA Z_0 \in
       n\Gamma)\right),
     $$
since by the regular variation the two probabilities differ,
asymptotically, by a finite positive multiplicative constant. 
  This proves \eqref{e:sec.p} in the case $\alpha>2$, and the analysis
  in the case $1<\alpha\leq 2$, is entirely similar. 
 
Therefore, it only remains to check
\eqref{eq:coef_tot_bdd}. By \eqref{e:S0n},
$$
\zeta_{in} = \left\{ \begin{array}{ll}
                       \gamma_{in}-\cA & \text{for} \ i\in [0,\ldots, n-1],\\
                       \gamma_{in} & \text{for} \ i\notin [0,\ldots, n-1].
\end{array}
\right.
$$
Notice that 
\begin{align*}
&\sum_{i=0}^{n-1} \|\gamma_{in}-\cA\|
= \sum_{i=0}^{n-1} \left\| \sum_{j=-\infty}^{-(i+1)} A_j +
  \sum_{j=n-i}^\infty A_j\right\| \\
\leq & \sum_{i=0}^{n-1} \sum_{j=-\infty}^{-(i+1)} \|A_j\|
+ \sum_{i=1}^n \sum_{j=i}^\infty \|A_j\|=o(n)
\end{align*}
by the absolute summability assumption
\eqref{eq:def_coeff1}. Similarly, 
\begin{align*}
\sum_{i=-\infty}^{-1} \|\gamma_{in}\|\leq \sum_{i=-\infty}^{-1}
  \sum_{j=-i}^{n-i-1} \| A_j\| = \sum_{j=0}^{n-1}\sum_{i=j+1}^\infty  \|A_i\|=o(n)
\end{align*}
and 
\begin{align*}
\sum_{i=n}^{\infty} \|\gamma_{in}\|\leq \sum_{i=n}^{\infty}
  \sum_{j=-i}^{n-i-1} \| A_j\| = \sum_{j=1}^{n}\sum_{i=-\infty}^{-j}
  \|A_i\|=o(n). 
\end{align*}
Hence \eqref{eq:coef_tot_bdd} follows. 

Next we prove the equivalence of 2. and 3. 
 Denote $C_i = \{ \cA Z_i \in n\Gamma\}$, so that 
\begin{align*}
    P(\cup_{i\in I_n} C_i) \leq   |I_n|P(C_0). 
\end{align*}
For the bound in the other direction, by the inclusion exclusion
formula and independence, 
\begin{align*}
    & P(\cup_{i\in I_n} C_i) \geq  |I_n|P(C_0) -   (|I_n|(|I_n| - 1)/2) P(C_0)^2\\
=& |I_n| P(C_0) \left[1-  (|I_n|-1) P\left(C_0\right)/2\right]. 
\end{align*}
Since $P(C_0)$ is regularly varying in $n$ with exponent $-\alpha<-1$
and  $|I_n|\leq n$, the expression in the square brackets converges to
1, and the equivalence of 2. and 3. follows.

We now check the equivalence of 3. and 4. Write 
\begin{align} \label{e:1.enough}
&1-\frac{P(S_0^n\in n\Gamma  \ \text{and there is} \  i\in I_n \
  \text{with} \  \mathcal{A}Z_i \in n\Gamma)}{P(\text{there is} \
  i\in I_n \ \text{with} \  \mathcal{A}Z_i \in n\Gamma)} \\
\notag \leq& \frac{\sum_{i\in I_n}P(S_0^n\notin n\Gamma, \, \mathcal{A}Z_i \in n\Gamma)}{P(\text{there is} \
      i\in I_n \ \text{with} \  \mathcal{A}Z_i \in n\Gamma)}
      \sim \frac{\sum_{i\in I_n}P(S_0^n\notin n\Gamma, \,
      \mathcal{A}Z_i \in n\Gamma)}{|I_n| P (\mathcal{A}Z_0 \in
      n\Gamma)}, 
\end{align}
where we have used the already established equivalence of 1. and 3.  

Let $d_n\to\infty$, $d_n/|I_n|\to 0$ be a sequence of positive
numbers. For any $\eps>0$, 
\begin{align*}
  &\sum_{i\in I_n}P(S_0^n\notin n\Gamma, \, \mathcal{A}Z_i \in n\Gamma^{-\eps})
  \leq \sum_{i\in I_n\cap[d_n, n-d_n]}P(S_0^n\notin n\Gamma, \, \mathcal{A}Z_i \in n\Gamma^{-\eps})\\
   +& 2d_nP(\mathcal{A}Z_0 \in n\Gamma^{-\eps}).
\end{align*}
Note that for every $i$, 
\begin{align*}
&P(S_0^n\notin n\Gamma, \, \mathcal{A}Z_i \in n\Gamma^{-\eps}) \\
\leq &P(S_0^n\notin n\Gamma, \, \gamma_{in}Z_i \in n\Gamma^{-\eps/2}) 
+ P(\mathcal{A}Z_i \in n\Gamma^{-\eps}, \, \gamma_{in}Z_i \notin
  n\Gamma^{-\eps/2})\\
\leq &P\left( \left\| \sum_{l\not=i}
       \gamma_{ln}Z_l\right\|>n\eps/2\right)P\bigl( \gamma_{in}Z_i \in n\Gamma^{-\eps/2}\bigr)
+ P\bigl( \| (\cA -\gamma_{in})Z_i\|>n\eps/2\bigr)\\
\leq &P\left( \left\| \sum_{l\not=i}
       \gamma_{ln}Z_l\right\|>n\eps/2\right)P\bigl( \cA Z_i \in n\Gamma\bigr)
+ 2P\bigl( \| (\cA -\gamma_{in})Z_i\|>n\eps/2\bigr).
\end{align*}
Since
\begin{align*}
\frac{2d_nP(\mathcal{A}Z_0 \in n\Gamma^{-\eps})}{|I_n| P (\mathcal{A}Z_0 \in
      n\Gamma)} \leq 2d_n/|I_n|\to 0,
\end{align*}
\begin{align*}
&\frac{\sum_{i\in I_n\cap[d_n, n-d_n]} P\left( \left\| \sum_{l\not=i}
       \gamma_{ln}Z_l\right\|>n\eps/2\right)P\bigl( \cA Z_i \in
  n\Gamma\bigr)}{|I_n| P (\mathcal{A}Z_0 \in       n\Gamma)} \\
\leq& \frac{1}{|I_n|}\sum_{i\in I_n\cap[d_n, n-d_n]} P\left( \left\| \sum_{l\not=i}
       \gamma_{ln}Z_l\right\|>n\eps/2\right) \to 0
\end{align*}
because
\begin{align*}
&P\left( \left\| \sum_{l\not=i}
    \gamma_{ln}Z_l\right\|>n\eps/2\right) =
P\bigl(\big\| S_0^n - \gamma_{in}Z_i\big\|>n\eps/2\bigr)\\
\leq& P\bigl(\big\| S_0^n \big\|>n\eps/4\bigr)+ P(A_{abs}\|
      Z_0\|>n\eps/4)\to 0, 
\end{align*}
and 
\begin{align*}
  &\frac{\sum_{i\in I_n\cap[d_n, n-d_n]}P\bigl( \| (\cA -\gamma_{in})Z_i\|>n\eps/2\bigr)}
{|I_n| P (\mathcal{A}Z_0 \in       n\Gamma)} \\
\leq& \frac{P\left( \left( \sum_{|i|\geq d_n}
       \|A_i\|\right)\|Z_0\| >n\eps/2\right)}{P (\mathcal{A}Z_0
       \in       n\Gamma)} \to 0
\end{align*}
by the absolute summability assumption \eqref{eq:def_coeff1}, we
conclude that
\begin{equation} \label{e:with.eps}
\frac{\sum_{i\in I_n}P(S_0^n\notin n\Gamma, \,
      \mathcal{A}Z_i \in n\Gamma^{-\eps})}{|I_n| P (\mathcal{A}Z_0 \in
      n\Gamma)} \to 0
  \end{equation}
  for any $\eps>0$. Since
  \begin{align*}
&\left| \sum_{i\in I_n}P(S_0^n\notin n\Gamma, \,
      \mathcal{A}Z_i \in n\Gamma)- \sum_{i\in I_n}P(S_0^n\notin n\Gamma, \,
      \mathcal{A}Z_i \in n\Gamma^{-\eps})\right|\\
\leq& |I_n| P\bigl(\cA Z_0\in n(\Gamma\setminus \Gamma^{-\eps})\bigr),
  \end{align*}
  it follows from \eqref{e:with.eps} that
  \begin{align*}
&\limsup_{n\to\infty} \frac{\sum_{i\in I_n}P(S_0^n\notin n\Gamma, \,
      \mathcal{A}Z_i \in n\Gamma)}{|I_n| P (\mathcal{A}Z_0 \in
      n\Gamma)} \leq \limsup_{n\to\infty} \frac{P \bigl(\mathcal{A}Z_0 \in
      n(\Gamma\setminus \Gamma^{-\eps})\bigr)}{P (\mathcal{A}Z_0 \in
      n\Gamma)}  \\
\leq& \frac{\nu\bigl( \cA^{-1}( {\bar \Gamma}\setminus
    \Gamma^{-\eps})\bigr)}{\nu(\Gamma)} \to 0
  \end{align*}
 as $\eps\to 0$ by the assumption of the $\nu$-continuity of the set
 $\cA^{-1}\Gamma$.  Together with \eqref{e:1.enough} this establishes
 the equivalence of 3. and 4. 

Finally, we check the equivalence of 4. and 5. Let $L = [-n{M} , n{M}
]\cap \mathbb{N}$. Note that 
\begin{align*}
&1-\frac{P( S_0^n\in n\Gamma,   \ \text{there is} \  i\in I_n \
      \text{with} \  \mathcal{A}Z_i \in n\Gamma \ \text{and for all} \ 
        i\not=l\in L,    \|Z_l\| \leq n\delta )}{P(S_0^n\in n\Gamma  \
  \text{and there is} \  i\in I_n \ \text{with} \  \mathcal{A}Z_i \in
  n\Gamma)} \\
\leq&  \frac{\sum_{i\in I_n}\sum_{l\in L, \, l\not= i} P( \cA  Z_i\in n
      \Gamma)P(\|Z_l\| > n\delta)}{P(S_0^n\in n\Gamma  \
  \text{and there is} \  i\in I_n \ \text{with} \  \mathcal{A}Z_i \in
  n\Gamma)}, 
\end{align*}
so by the already established equivalence of 
2. and 4., it suffices to show that 

$$\lim_{n\to \infty} 2nM P(\|Z_l\| > n\delta) = 0.$$
This is, however, immediate since  the probability in question is
regularly varying with exponent $-\alpha<-1$.

This completes the proof of Proposition \ref{prop:equiProb}. \qed

\medskip

We now prove Corollary \ref{c:range}. To start with, observe that by
\eqref{e:almost.same}, 
\begin{align*}
&P\bigl( \{S_j^n \in n\Gamma, \, 0\leq j\leq n\theta\}\setminus \{
  \text{there is} \  i\in \{\lfloor n\theta\rfloor,\ldots, n-1\} \
  \text{with} \  \mathcal{A}Z_i \in n\Gamma\}\bigr) \\
\leq&P\bigl( \{S_0^n \in n\Gamma,  S_{\lfloor n\theta\rfloor}^n \in
      n\Gamma\}\setminus \{ 
  \text{there is} \  i\in \{\lfloor n\theta\rfloor,\ldots, n-1\} \
  \text{with} \  \mathcal{A}Z_i \in n\Gamma\}\bigr) \\
=&  P\bigl( \{\text{there is} \  i\in
  \{0,\ldots, n-1\} \ \text{with} \  \mathcal{A}Z_i \in n\Gamma 
\ \   \text{and there is} \ i\in
  \{\lfloor n\theta\rfloor,\ldots, \lfloor n\theta\rfloor +n-1\} \\ 
 &  \hskip 0.2in \text{with} \  \mathcal{A}Z_i \in n\Gamma \} \setminus \{
  \text{there is} \  i\in \{\lfloor n\theta\rfloor,\ldots, n-1\} \
  \text{with} \  \mathcal{A}Z_i \in n\Gamma\}\bigr) \\
+&
o\bigl( n P(
  \mathcal{A}Z_0\in n\Gamma)\bigr) = o\bigl( n P(
  \mathcal{A}Z_0\in n\Gamma)\bigr), 
\end{align*}
where on the last step we used the equivalence of 2. and 3. in
Proposition \ref{prop:equiProb}.

For the second bound in the corollary we use an approach analogous to
the proof of equivalence of 3. and 4. in Proposition
\ref{prop:equiProb}. Write
\begin{align*}
&P\bigl(   \{
  \text{there is} \  i\in \{\lfloor n\theta\rfloor,\ldots, n-1\} \
  \text{with} \  \mathcal{A}Z_i \in n\Gamma\} \setminus \{S_j^n \in n\Gamma, \, 0\leq j\leq n\theta\}\bigr) \\
\leq& \sum_{i =\lfloor n\theta\rfloor}^{n-1} P\bigl( \cA Z_i \in  n\Gamma, \,  S_j^n
  \notin n\Gamma \ \ \text{for some} \ 0\leq j\leq n\theta\bigr).
\end{align*}
Let $\vep>0$ and let $d_n\to\infty, \, d_n/n\to 0$. We have
\begin{align*}
&\sum_{i =\lfloor n\theta\rfloor}^{n-1} P\bigl( \cA Z_i \in  n\Gamma^{-\vep}, \,  S_j^n
  \notin n\Gamma \ \ \text{for some} \ 0\leq j\leq n\theta\bigr)\\
\leq& \sum_{i =\lfloor n\theta\rfloor + d_n}^{n-d_n} P\bigl( \cA Z_i \in  n\Gamma^{-\vep}, \,  S_j^n
  \notin n\Gamma \ \ \text{for some} \ 0\leq j\leq n\theta\bigr)\\
+&2d_nP\bigl(\cA Z_0 \in  n\Gamma^{-\vep}\bigr). 
\end{align*}
Next, for every $i$,
\begin{align*}
&P\bigl( \cA Z_i \in  n\Gamma^{-\vep}, \,  S_j^n
  \notin n\Gamma 
 \ \ \text{for some} \ 0\leq j\leq n\theta\bigr)\\
\leq& P\bigl(  S_j^n
  \notin n\Gamma, \, \gamma_{i-j,n}Z_i\in n\Gamma^{-\vep/2} 
 \ \ \text{for some} \ 0\leq j\leq n\theta\bigr)\\
+& P\bigl( \cA Z_i \in  n\Gamma^{-\vep}, \, \gamma_{i-j,n}Z_i\notin
      n\Gamma^{-\vep/2}   \ \ \text{for some} \ 0\leq j\leq
      n\theta\bigr) \\
\leq& P\left( \left\| \sum_{l\not=i}
      \gamma_{l-j,n}Z_l\right\|>n\vep/2, \, \gamma_{i-j,n}Z_i\in n\Gamma^{-\vep/2} 
 \ \ \text{for some} \ 0\leq j\leq n\theta\right)\\
+& P\bigl( \| (\cA -\gamma_{i-j,n})Z_i\|>n\vep/2 \ \ \text{for some} \
   0\leq j\leq n\theta\bigr) \\
\leq& P\left( \left\| \sum_{l\not=i}
      \gamma_{l-j,n}Z_l\right\|>n\vep/2 \ \ \text{for some} \ 0\leq
      j\leq n\theta\right)P(\cA Z_i\in n\Gamma) \\
+& 2P\bigl( \| (\cA -\gamma_{i-j,n})Z_i\|>n\vep/2 \ \ \text{for some} \
   0\leq j\leq n\theta\bigr).  
\end{align*}
We conclude as in the proof of equivalence of 3. and 4. in Proposition
\ref{prop:equiProb}, by noticing that
\begin{align*}
&P\left( \left\| \sum_{l\not=i}
      \gamma_{l-j,n}Z_l\right\|>n\vep/2 \ \ \text{for some} \ 0\leq
      j\leq n\theta\right) \\
\leq& P\bigl( \| S_j^n \|>n\vep/4 \ \ \text{for some} \ 0\leq
      j\leq n\theta\bigr) + P(A_{abs} \|Z_0\|>n\vep/4)\to 0
  \end{align*}
  by the ergodic theorem. \qed

%

\section{Proof of \autoref{thm:ptprocess}}\label{sec:proof_thm1}

In order to show weak convergence of a sequence of Radon measures, it
is enough to prove convergence of certain corresponding Laplace
functionals; see Proposition 3.19 in \cite{resnick:1987}.  That is, 
 the weak convergence claimed in the theorem will follow once we
prove that
\begin{equation} \label{e:Laplace.conv}
E\exp\left[ \left\{ -\sum_{i=-\infty}^\infty f\bigl( i/n,\cA
    Z_i/n\bigr)\right\} \Bigg| B_0(n,\Gamma)\right]
\to E\exp\big\{ -f(U,X)\bigr\}
\end{equation}
for every nonnegative continuous function 
$f: \mathbb{R} \times \mathbb{R}^d \to \mathbb{R}_+$   whose support
is contained in $[-{M}, {M}] \times
B_{0}(\theta)^c$ for some $M,\theta \in (0,\infty)$. 
  
Fix $f$ as above, and abbreviate the left-hand side of
\eqref{e:Laplace.conv} as $E\exp\{ -N_n(f)|B_0(n,\Gamma)\}$. Let
$0<\delta<\theta$ and denote 
\begin{align*}
 B_0^{(1)}(n,\Gamma)= B_0(n,\Gamma) \cap \bigl\{& \text{there is} \
  i\in \{0,\ldots, n-1\} \       \text{with} \  \mathcal{A}Z_i \in n\Gamma \\
& \text{and for all}
      \  i\not=l  \in
      [-n{M}, n{M}],    \|Z_l\| \leq n\delta )\}. 
\end{align*}
It follows immediately from $f\geq 0$ and the equivalence of 1. and 5. in
Proposition \ref{prop:equiProb} that 
\begin{equation}\label{eq:sum_single_large_jump}
 \frac{E\exp\{ -N_n(f)\one(B_0(n,\Gamma))\}}{P(B_0(n, \Gamma))}-\frac{E\exp\{ -N_n(f)\one(B_0^{(1))}(n,\Gamma)\}}{P(B_0(n, \Gamma))}
 \to 0. 
\end{equation}
Next, for $i=0,\ldots, n-1$ let $B_0^{(i,n)}(\Gamma)$ be the event
defined in \eqref{e:event.i}. By the choice of $\delta$ these events
are disjoint and  $N(f)1_{B_0^{(i,n)}(\Gamma)} = f(i/n, \cA Z_i/n)
 1_{B_0^{(i,n)}(\Gamma)}$. Furthermore, $\cup_i
 B_0^{(i,n)}(\Gamma)= B_0^{(1)}(n,\Gamma)$. Therefore, 
 \begin{align*}
 &\frac{E\exp\{ -N_n(f)\one_{B_0^{(1)}(n,\Gamma)}\}}{P(B_0(n,\Gamma))} = \sum_{i=0}^{n-1}
 \frac{E\exp\{ -f(i/n, \cA Z_i/n)\one_{B_0^{(i,n)}(\Gamma)}\} }{P(B_0(n,\Gamma))}\\
=& (1+o(1)) \sum_{i=0}^{n-1}
 \frac{E\exp\{ -f(i/n, \cA Z_i/n)\one_{ \cA Z_i\in n\Gamma}\}
   }{P(B_0(n,\Gamma))} \\
=& (1+o(1)) \sum_{i=0}^{n-1}
 \frac{E\exp\{ -f(i/n, \cA Z_i/n)\one_{ \cA Z_i\in n\Gamma}\}
   }{nP(\cA   Z_0\in n \Gamma) } \to E[\exp(-f(U, X))], 
 \end{align*}
where on the last three steps we
used   \autoref{coro:sum_exp_equi}, 
the equivalence of 1. and 2. in
Proposition \ref{prop:equiProb} and 
Lemma \ref{lemma:equiExp2}. By
\eqref{eq:sum_single_large_jump} we
conclude that
$$
E[\exp\{ -N_n(f)\}|B_0(n,\Gamma)] \to
E[\exp(-f(U, X))],
$$
hence proving
\eqref{e:Laplace.conv}. This
completes the proof of the theorem. \qed

 \section{Proof of \autoref{thm:cluster_size}}\label{sec:proof_thm2}
Analogously to the notation
$B_j(n,\Gamma)$ in \eqref{e:rare.event}, we
will use the notation   $B_j(n,\Psi)= \{S_j^n\in n \Psi\}$. Let
$\theta_1, \theta_2 \geq 0$ be such that $\theta_1 + \theta_2 <
1$. The probability we analyze in the course of the proof is the
(conditional) probability of the intersection of all events
$B_j(n,\Psi)$ for $j$ in the range from $\lfloor -n\theta_2\rfloor+1$
to $\lceil n\theta_1 \rceil-1$. Recall that the sum
$\sum_{i=0}^{n-1} \cA Z_i$ in \eqref{e:split.S_n} collected the noise
variables that may be responsible for the event $B_0(n,\Gamma)$. In
the present case the noise variables $Z_i$ that may be responsible for the above
intersection turn out to be those with $i$ in the range $\{\lfloor
n\theta_1 \rfloor, \cdots, \lfloor n(1- \theta_2) \rfloor \}$. We
collect them in an analogous sum   $T = \cA \sum_{i= \lfloor n\theta_1\rfloor}^{\lfloor
   n(1-\theta_2)\rfloor} Z_i$.  We have 
 \begin{align} \label{e:upper.quad}
    & P\bigl( J_n^{\Psi,+}/n \geq \theta_1,   J_n^{\Psi,-}/n \leq -
       \theta_2\big| B_0(n,\Gamma)\bigr)=  P\left(\bigcap_{j= -\lfloor
       n\theta_2\rfloor +1}^{\lceil n\theta_1\rceil -1} B_j(n,\Psi)\bigg| B_0(n,\Gamma)\right)\\
  \notag    = & P\left(\{ T\in n\Psi\}\cap  \bigcap_{j= -\lfloor
       n\theta_2\rfloor +1}^{\lceil n\theta_1\rceil -1} B_j(n,\Psi)\bigg| B_0(n,\Gamma)\right) \\
+ & P\left(\{ T\notin n\Psi\}\cap \bigcap_{j= -\lfloor
       n\theta_2\rfloor +1}^{\lceil n\theta_1\rceil -1} B_j(n,\Psi)\bigg|
         B_0(n,\Gamma)\right). \notag 
  \end{align}
 
 We start with the following claim.

\begin{equation}\label{claim:small_common_sum}
 P\left(\{ T\notin n\Psi\}\cap \bigcap_{j= -\lfloor
       n\theta_2\rfloor +1}^{\lceil n\theta_1\rceil -1} B_j(n,\Psi)\bigg|
         B_0(n,\Gamma)\right) \to 0, \text{ as } n\to \infty
\end{equation}
For continuity of the presentation we defer the proof of
\eqref{claim:small_common_sum} to the final section.

We now analyze the first probability in the right-hand side of
\eqref{e:upper.quad}. 
We write it as
\begin{align} \label{e:P1.upper}
&P\left(\{ T\in n\Psi\}\cap \bigcap_{j= -\lfloor
       n\theta_2\rfloor +1}^{\lceil n\theta_1\rceil -1}   B_j(n,\Psi)\bigg|
  B_0(n,\Gamma)\right) \\
  \notag =& P\bigl(T\in n\Psi\big| B_0(n,\Gamma)\bigr)
  P\left(\bigcap_{j= -\lfloor
       n\theta_2\rfloor +1}^{\lceil n\theta_1\rceil -1}  B_j(n,\Psi)\bigg|
  \{ T\in n\Psi\}\cap B_0(n,\Gamma)\right).
  \end{align}
    
We now apply the equivalence between 1.,  3. and 4. in Proposition
\ref{prop:equiProb} to the moving average process for which $A_0=\cA$
and $A_i=0$ for $i\not=0$ to conclude that
\begin{align*} 
&P\bigl(T\in n\Psi\big| B_0(n,\Gamma)\bigr) \sim P\bigl(\cA Z_i
  \in n\Psi \ \ \text{for some} \ \ \lfloor n\theta_1\rfloor \leq i
  \leq \lfloor n(1-\theta_2)\rfloor\big| B_0(n,\Gamma)\bigr) \\
 \notag =&P\left( N_n\left(  \bigl[ \lfloor n\theta_1\rfloor/n, \lfloor
    n(1-\theta_2)\rfloor/n\bigr]\times \Psi\right)>0\big|B_0(n,\Gamma)
    \right) \\
\notag \to& P\bigl( \eps_{(U,X)}\bigl([\theta_1,1-\theta_2]\times \Psi\bigr)>0\bigr)    
= (1-\theta_1-\theta_2) \frac{\nu(\mathcal{A}^{-1} (\Psi \cap
     \Gamma))}{\nu(\mathcal{A}^{-1} \Gamma)} 
 \end{align*}
  by Theorem \ref{thm:ptprocess} and continuous mapping theorem. If
  $\nu(\cA^{-1}(\Psi \cap \Gamma)) = 0$, \eqref{e:P1.upper} converges
  to 0, hence so does \eqref{e:upper.quad}. The theorem holds in this
  case. In the following, we assume, therefore, that $\nu(\cA^{-1}(\Psi \cap \Gamma)) > 0$. Under this assumption, if we show that 
 \begin{equation*} 
 P\left(\bigcap_{j= -\lfloor
       n\theta_2\rfloor +1}^{\lceil n\theta_1\rceil -1} B_j(n,\Psi)\bigg|
  \{ T\in n\Psi\}\cap B_0(n,\Gamma)\right) \to 1, 
\end{equation*}
then this, along with \eqref{claim:small_common_sum} and
\eqref{e:upper.quad} will give us
$$
P\bigl( J_n^{\Psi,+}/n \geq \theta_1,   J_n^{\Psi,-}/n \leq-
       \theta_2\big| B_0(n,\Gamma)\bigr) \to (1-\theta_1-\theta_2)
       \frac{\nu(\mathcal{A}^{-1} (\Psi \cap 
     \Gamma))}{\nu(\mathcal{A}^{-1} \Gamma)} 
   $$
 for all $\theta_1,\theta_2\geq 0$  such that $\theta_1+\theta_2<1$,
 which implies the statement of the theorem. We will prove that under
 the assumption $\nu(\cA^{-1}(\Psi \cap \Gamma)) > 0$,  
 \begin{equation*} 
 P\left(\bigcup_{j= -\lfloor
       n\theta_2\rfloor +1}^{\lceil n\theta_1\rceil -1} B_j(n,\Psi)^c\bigg|
  \{ T\in n\Psi\}\cap B_0(n,\Gamma)\right) \to 0, 
\end{equation*}
for which it is enough to prove that
\begin{equation*} 
 \frac{P\left(\{ T\in n\Psi\}\cap \bigcup_{j= -\lfloor
       n\theta_2\rfloor +1}^{\lceil n\theta_1\rceil -1} B_j(n,\Psi)^c\right)}
{P\bigl(  \{ T\in n\Psi\}\cap B_0(n,\Gamma)\bigr)} \to 0.  
\end{equation*}
Since by 
\begin{align*}
   &P\bigl(  \{ T\in n\Psi\}\cap
  B_0(n,\Gamma)\bigr) \\
=& P\bigl(  \cA Z_i\in n\Psi \ \ \text{for
  some} \ i= \lfloor n\theta_1\rfloor, \ldots, \lfloor
   n(1-\theta_2)\rfloor \\
 & \hskip 0.2in  \text{and} \ \ \cA Z_i\in n\Gamma\ \ \text{for
  some} \ i=0,\ldots, n-1\bigr) + o\bigl( nP(\cA Z_0\in n\Gamma)\bigr)\\
  \geq & P(\cA Z_i \in n \bigl(\Psi\cap \Gamma\bigr) \ \ \text{for some} \ i = \lfloor n\theta_1\rfloor, \ldots, \lfloor
   n(1-\theta_2)\rfloor)\\
    &  \hskip 0.2in + o\bigl( nP(\cA Z_0\in n\Gamma)\bigr)
\end{align*}
we conclude that 
$$
\liminf_{n\to\infty} \frac{P\bigl(  \{ T\in n\Psi\}\cap
  B_0(n,\Gamma)\bigr)}{n(1-\theta_1-\theta_2) P(\cA Z_0\in n\bigl(\Psi\cap \Gamma\bigr))}\geq 1.
$$

We will prove that
\begin{equation} \label{e:cond.p.big}
 \frac{P\left(\{ T\in n\Psi\}\cap \bigcup_{j= -\lfloor
       n\theta_2\rfloor +1}^{\lceil n\theta_1\rceil -1} B_j(n,\Psi)^c\right)}
{n P(\cA Z_0\in n\bigl(\Psi\cap \Gamma\bigr))} \to 0.  
\end{equation}
 
Concentrating once again on the  relevant individual noise variables
$Z_i$, we write 
\begin{align} \label{e:split.num3} 
&P\left(\{ T\in n\Psi\}\cap  \bigcup_{j= -\lfloor
       n\theta_2\rfloor +1}^{\lceil n\theta_1\rceil -1}
                 B_j(n,\Psi)^c\right) \leq P\left(\{ T\in n\Psi\}\cap \bigcap_{i =\lceil
    n\theta_1\rceil}^{\lfloor   n(1-\theta_2)\rceil} \bigl\{\cA Z_i
                 \notin n\Psi\bigr\}\right) \\
 +& P\left(\bigcup_{i =\lceil
    n\theta_1\rceil}^{\lfloor   n(1-\theta_2)\rceil} \bigl\{\cA Z_i
                 \in n\Psi\bigr\} \cap \bigcup_{j= -\lfloor
       n\theta_2\rfloor +1}^{\lceil n\theta_1\rceil -1}
                 B_j(n,\Psi)^c\right) =: I_1(n) +I_2(n). 
   \notag
\end{align}
The proof of the theorem will be completed once we prove
\eqref{claim:small_common_sum}, as well  as the claim that  
\begin{equation} \label{e:terms.13}
\lim_{n\to\infty} \frac{I_i(n)}
{n P(\cA Z_0\in n\bigl(\Psi\cap \Gamma\bigr))}=0, \ i=1,2.
\end{equation} 

We begin by proving \eqref{claim:small_common_sum}. We have 
\begin{align*}
     &   P\left(\{ T\notin n\Psi\}\cap \bigcap_{j= -\lfloor
       n\theta_2\rfloor +1}^{\lceil n\theta_1\rceil -1} B_j(n,\Psi)\bigg|
     B_0(n,\Gamma)\right) \\
\leq& \frac{P\bigl(S^n_{-\lfloor
       n\theta_2\rfloor +1}\in n\Psi, \, S^n_{\lceil n\theta_1\rceil-1}
       \in n \Psi, \, T \notin n \Psi\bigr)}{P(B_0(n,\Gamma))}, 
 \end{align*}
so by the equivalence of 1. and 2. in  in Proposition
\ref{prop:equiProb}, it is enough to show that
\begin{align*}
\lim_{n\to\infty} \frac{P\bigl(S^n_{-\lfloor
       n\theta_2\rfloor +1}\in n\Psi, \, S^n_{\lceil n\theta_1\rceil-1}
       \in n \Psi, \, T \notin n \Psi\bigr)}{nP(\cA Z_0\in
  n\Gamma)}=0, 
\end{align*}
which will follow from the claims 
\begin{align} \label{e:23.1}
&\lim_{n\to\infty} \frac{P\bigl(S^n_{-\lfloor
       n\theta_2\rfloor +1}\in n\Psi, \, S^n_{\lceil n\theta_1\rceil-1}
       \in n \Psi, \,  \cA Z_i\notin n\Psi, \,  
  i=  \lceil n\theta_1\rceil-1, \ldots, -\lfloor
       n\theta_2\rfloor +1 +n\bigr)}{nP(\cA Z_0\in
  n\Gamma)}\\
\notag =&0 
\end{align}
and
\begin{align} \label{e:23.2}
\lim_{n\to\infty} \frac{P\bigl( T \notin n \Psi, \,  \cA
  Z_i\in n\Psi \  \ \text{for some} 
  \ \ i= \lceil n\theta_1\rceil-1, \ldots, -\lfloor
       n\theta_2\rfloor +1 +n\bigr) }{nP(\cA Z_0\in
  n\Gamma)}=0. 
\end{align}
In fact, the proof of \eqref{e:23.1} is very similar to the proof of
\eqref{e:terms.13} for $i=1$, and the proof of \eqref{e:23.2} is very
similar to the proof of \eqref{e:terms.13} for $i=2$. 
 
Using \eqref{e:almost.same},
\begin{align*}
&P\bigl(S^n_{-\lfloor
       n\theta_2\rfloor +1}\in n\Psi, \, S^n_{\lceil n\theta_1\rceil-1}
       \in n \Psi, \,  \cA Z_i\notin n\Psi, \,  
  i= \lfloor  n\theta_1\rfloor, \ldots, \lfloor
   n(1-\theta_2)\rfloor\bigr) \\
=&P\bigl( A_{n,-}\cap A_{n,+}\cap \{\cA Z_i\notin n\Psi, \,  
  i= \lfloor  n\theta_1\rfloor, \ldots, \lfloor
   n(1-\theta_2)\rfloor\}\bigr) + o\bigl( nP(\cA Z_0\in
  n\Gamma)\bigr), 
\end{align*}
where 
$$
 A_{n,-}=\bigl\{\text{there is} \  i\in \{-\lfloor
       n\theta_2\rfloor +1,\ldots, -\lfloor
       n\theta_2\rfloor +1 +n\} \ \text{with} \  \mathcal{A}Z_i \in n\Psi\bigr\},
$$
$$
 A_{n,+}=\bigl\{\text{there is} \  i\in \{\lceil n\theta_1\rceil-1
 ,\ldots, \lceil n\theta_1\rceil+n-2 \} \ \text{with} \
 \mathcal{A}Z_i \in n\Psi\bigr\}. 
$$
Letting $I_{n, -}= \{-\lfloor
       n\theta_2\rfloor +1,\ldots, \lfloor n\theta_1\rfloor - 1\}$, $I_{n, +} = \{\lfloor
   n(1-\theta_2)\rfloor + 1, \lceil n\theta_1\rceil+n-2\}$, and
   noticing that these sets are disjoint, 
 we can rewrite the right-hand side above as 
\begin{align*}
    &P(\text{there are} \ i_{-} \in I_{n, -}, \,  i_{+} \in I_{n, +} \
    \text{ with } \ \cA Z_{i_{-}}, \cA Z_{i_{+}} \in n \Psi) + o(nP(\cA Z_0 \in n \Gamma)) \\
 =& o(nP(\cA Z_0 \in n \Gamma)),
\end{align*}
where on the last step we used  the equivalence of 1. and 2. in
Proposition \ref{prop:equiProb} and independence. Therefore,
\eqref{e:23.1} follows. 

Similarly, by \eqref{e:almost.same},
$$P\bigl( T \notin n \Psi, \,  \cA
  Z_i\in n\Psi \  \ \text{for some} 
  \ \ i= \lfloor n\theta_1\rfloor, \ldots, \lfloor
       n(1-\theta_2)\rfloor  \bigr)  = o\bigl( nP(\cA Z_0\in
  n\Gamma)\bigr). 
$$
Since the events $\{\cA
  Z_i\in n\Psi \  \ \text{for some} 
   \ \ i= \lfloor n\theta_1\rfloor, \ldots, \lfloor
       n(1-\theta_2)\rfloor \}$ and  $\{\cA
  Z_i\in n\Psi \  \ \text{for some} 
  \ \ i= \lceil n\theta_1\rceil-1, \ldots, -\lfloor
       n\theta_2\rfloor +1 +n\}$ differ by at most 2 events of the
       type $\{\cA Z_i\in n\Psi\}$, \eqref{e:23.2} follows as well. 

Finally, we observe  that for any events $B_1$ and $B_2$,
$B_1^c\triangle B_2^c=B_1\triangle B_2$, which in combination with
\eqref{e:almost.same}, makes the proof of \eqref{e:terms.13} for $i=1$
nearly identical to the proof of \eqref{e:23.1}, and the proof of
\eqref{e:terms.13} for $i=2$ nearly identical to the proof of
\eqref{e:23.2}, except that in the latter case we use Corollary
\ref{c:range}.



   

This completes the proof of \autoref{thm:cluster_size}. 
\qed

\begin{appendix}\label{appn}
\section*{Proof of Lemma \ref{lemma:WLLN_bounded}}
Since $E[Z_i]=0$, and $(\beta_{in})$ are uniformly bounded, 
we have $|E[Z_i\one(|\beta_{in}Z_i|\leq
\tau_n)]|\leq \delta/(4L)$ for all $n$ large enough and all $i$.  Together with $\sum_i \beta_{in} \leq nL$, this implies that
for all $n$ large enough, 
 \begin{align*}
     \left|E\left[\sum_i \beta_{in} Z_i\one(|\beta_{in}Z_i|\leq
\tau_n)\right]\right| \leq \sum_{i }|\beta_{in}|| E[Z_i\one(|\beta_{in}Z_i|\leq
\tau_n)] |
      \leq n\delta/2.
\end{align*}
For such $n$, 
\begin{align*}
      & P\left( \left| \sum_{i=-\infty}^\infty \beta_{in} Z_i
        \one(|\beta_{in}Z_i|\leq \tau n)\right| > n\delta\right) \\
      \leq&   P\left( \left| \sum_{i=-\infty}^\infty \beta_{in} \Bigl(
            Z_i\one(|\beta_{in}Z_i|\leq \tau n) - E[Z_i\one(|\beta_{in}Z_i|\leq \tau n)]\Bigr)
\right| > n\delta/2\right). 
\end{align*}
We use a concentration inequality, due to \cite{prokhorov:1959}, which
we state here for the sake of completeness. 
\begin{lemma} \label{lemma:Concentration}  
   Let $Y_1,\ldots, Y_m$ be  independent zero mean random variables
   such that for some $c>0$ we have 
   $Pr(|Y_i|\leq c) = 1$ for all $i$. Then for any $t> 0$, 
   $$P(Y_1+ \cdots + Y_m>t)\leq \left(\frac{ct}{\sum_{i=1}^m \Var(Y_i)}\right)^{-t/2c}.$$
\end{lemma}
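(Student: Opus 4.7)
The plan is to follow the classical Cram\'er--Chernoff route. For any $\lambda > 0$, independence and the exponential Markov inequality give
\[
P(Y_1+\cdots+Y_m > t) \leq e^{-\lambda t}\prod_{i=1}^m E[e^{\lambda Y_i}],
\]
reducing the problem to bounding each individual MGF and then selecting $\lambda$ so that the resulting exponent matches the target $-(t/2c)\log(ct/V)$.

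For the MGF bound I would combine $E[Y_i] = 0$ with the uniform constraint $|Y_i|\leq c$. A Taylor expansion kills the linear term, and the pointwise bound $|Y_i|^k \leq c^{k-2} Y_i^2$ for $k\geq 2$ collapses the tail of the series:
\[
E[e^{\lambda Y_i}] \leq 1 + \frac{\Var(Y_i)}{c^2}(e^{\lambda c}-1-\lambda c) \leq \exp\left(\frac{\Var(Y_i)}{c^2}(e^{\lambda c}-1-\lambda c)\right).
\]
Setting $V = \sum_i \Var(Y_i)$ and multiplying, the resulting Cram\'er bound is
\[
P(Y_1+\cdots+Y_m > t) \leq \exp\left(-\lambda t + \frac{V}{c^2}(e^{\lambda c}-1-\lambda c)\right).
\]

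Set $r = ct/V$. If $r \leq 1$ the Prokhorov bound $r^{-t/2c}$ is already at least $1$ and there is nothing to prove, so assume $r > 1$ and take $\lambda = c^{-1}\log r$. Using $V = ct/r$, the exponent simplifies to
\[
-\frac{t\log r}{c} + \frac{t}{cr}(r-1-\log r),
\]
and the target inequality reduces to the scalar estimate $2(r-1) \leq (r+2)\log r$ for $r \geq 1$. This holds with equality at $r=1$, and for $r > 1$ follows from a short derivative comparison: the derivative of the right-hand side, $1 + 2/r + \log r$, exceeds $2$ on $[1,\infty)$ because $\log r > 1 - 2/r$ there.

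The step requiring genuine thought is the MGF bound: the specific form $(ct/V)^{-t/2c}$ of the target forces us to exploit boundedness plus zero mean through the pointwise inequality $|Y_i|^k \leq c^{k-2} Y_i^2$, so that only the variance $\Var(Y_i)$ -- and not higher moments -- enters the cumulant control. Everything afterwards is a scalar optimization in $\lambda$ followed by an elementary calculus check.
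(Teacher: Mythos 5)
The paper does not prove this lemma; it is quoted ``for the sake of completeness'' with a citation to Prokhorov (1959), so there is no in-text argument to compare against. Your Cram\'er--Chernoff proof is correct and self-contained. The MGF bound is the standard Bennett-type estimate: since $E[Y_i]=0$ and $|Y_i|\le c$ a.s., the pointwise inequality $|Y_i|^k\le c^{k-2}Y_i^2$ for $k\ge 2$ gives $E[Y_i^k]\le c^{k-2}\Var(Y_i)$, and summing the series yields $E[e^{\lambda Y_i}]\le \exp\bigl(\tfrac{\Var(Y_i)}{c^2}(e^{\lambda c}-1-\lambda c)\bigr)$ for $\lambda>0$. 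Your choice $\lambda=c^{-1}\log r$ with $r=ct/V$ is suboptimal relative to the Bennett optimizer $\lambda=c^{-1}\log(1+ct/V)$, but it is exactly what produces the clean power $r^{-t/(2c)}$. The algebra reducing the exponent comparison to $2(r-1)\le(r+2)\log r$ for $r\ge 1$ is right, and the final calculus step is even simpler than you indicate: the elementary bound $\log r\ge 1-1/r$ immediately gives $\log r>1-2/r$ for all $r>0$, so $\tfrac{d}{dr}\bigl[(r+2)\log r\bigr]=\log r+1+2/r>2=\tfrac{d}{dr}\bigl[2(r-1)\bigr]$, and both sides vanish at $r=1$. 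The $r\le 1$ case is correctly dismissed since the claimed bound then exceeds $1$. No gaps.
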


If the series $\sum_{i} Y_i$ converges, then the statement of Lemma
\ref{lemma:Concentration} extends to the case $m=\infty$. We use the
concentration inequality in this situation, with  $Y_i =  \beta_{in} \Bigl(
            Z_i\one(|\beta_{in}Z_i|\leq \tau n) -
            E[Z_i\one(|\beta_{in}Z_i|\leq \tau n)]\Bigr) $,  so that
            $c = 2n\tau$. Here  $t =n\delta/2$.  We estimate the
variances using  \autoref{lemma:Varbound}. In the case $\alpha>2$ 
we have
\begin{align*}
&P\left( \left| \sum_{i=-\infty}^\infty \beta_{in} \Bigl(
            Z_i\one(|\beta_{in}Z_i|\leq \tau n) - E[Z_i\one(|\beta_{in}Z_i|\leq \tau n)]\Bigr)
\right| > n\delta/2\right)\\
\leq&  \left( \frac{n^2\tau\delta}{EZ_0^2 \sum_{i=-\infty}^\infty
    \beta_{in}^2}\right)^{-\delta/(8\tau)}
\end{align*}
for all $n$, 
proving the statement of part (a) of the lemma in this case.

In the case $1<\alpha\leq 2$, let $g/2$ be the regularly varying
function with index $2-\alpha$ from Lemma \ref{lemma:Varbound}. Then
by the Potter bounds (see e.g. Proposition 0.8 in \cite{resnick:1987}),
for every $0<\vep\leq 1$ there is $n_0=n_0(\vep, \tau/B)$ such that for all $n\geq
n_0$, 
\begin{align*}
  &\Var(Y_i) = \Var\bigl( \beta_{in} Z_i\one(|\beta_{in}Z_i|\leq \tau n)\bigr)
  = \beta_{in}^2 g\bigl( \tau n/|\beta_{in}|\bigr)/2 \\
\leq&  \beta_{in}^2 (1+\vep)(B/|\beta_{in}|)^{2-\alpha+\vep} g\bigl( \tau
      n/B\bigr)/2
      \leq B^{2-\alpha+\vep} |\beta_{in}|^{\alpha-\vep}g\bigl( \tau
      n/B\bigr),
\end{align*}
and he statement of part (a) of the lemma in this case once again
follows from the concentration inequality. 

For part (b) we write
\begin{align*}
 P\left( \left| \sum_{i=-\infty}^\infty \beta_{in} Z_i
       \right| > n\delta\right) \leq& P\bigl( \max_i |\beta_{in}Z_i| >
  n\tau \bigr) \\
+& P\left( \left| \sum_{i=-\infty}^\infty \beta_{in} Z_i
        \one(|\beta_{in}Z_i|\leq \tau n)\right| > n\delta\right).
\end{align*}
The required bound on the second term in the right-hand side above is
obtained in part (a), so the proof is completed by noticing that by
the Potter bounds, for every $0<\vep\leq 1$ there is $n_0=n_0(\vep,
\tau/B)$ such that for all $n\geq n_0$, 
\begin{align*}
    P(\max_i |\beta_{in} Z_i |> n\tau)\leq& \sum_{i} P(|\beta_{in} Z_i |> n\tau) \\
\leq& (1+\vep) B^{-\alpha +\vep} P(|Z_0|>n\tau/B) \sum_{i}
     |\beta_{in}|^{\alpha-\vep}. 
   \end{align*}
\qed

\end{appendix}

\begin{funding}
This work was partially supported by NSF grant DMS-2310974 at Cornell University.
\end{funding}


\begin{thebibliography}{8}

\bibitem[\protect\citeauthoryear{Chakrabarty and
  Samorodnitsky}{2023}]{chakrabarty:samorodnitsky:2023}
\begin{barticle}[author]
\bauthor{\bsnm{Chakrabarty},~\bfnm{A.}\binits{A.}} \AND
  \bauthor{\bsnm{Samorodnitsky},~\bfnm{G.}\binits{G.}}
(\byear{2023}).
\btitle{Clustering of large deviations in moving average processes: the long
  memory regime}.
\bjournal{Stochastic Processes and their Applications}
\bvolume{163}
\bpages{387-423}.
\end{barticle}
\endbibitem

\bibitem[\protect\citeauthoryear{Chakrabarty and
  Samorodnitsky}{2024}]{chakrabarty:samorodnitsky:2024}
\begin{barticle}[author]
\bauthor{\bsnm{Chakrabarty},~\bfnm{A.}\binits{A.}} \AND
  \bauthor{\bsnm{Samorodnitsky},~\bfnm{G.}\binits{G.}}
(\byear{2024}).
\btitle{Clustering of large deviations in moving average processes: the short
  memory regime}.
\bjournal{The Annals of Applied Probability}
\bvolume{34}
\bpages{3227-3250}.
\end{barticle}
\endbibitem

\bibitem[\protect\citeauthoryear{Dembo and
  Zeitouni}{1993}]{dembo:zeitouni:1993}
\begin{bbook}[author]
\bauthor{\bsnm{Dembo},~\bfnm{A.}\binits{A.}} \AND
  \bauthor{\bsnm{Zeitouni},~\bfnm{O.}\binits{O.}}
(\byear{1993}).
\btitle{Large Deviations Techniques and Applications}.
\bpublisher{Jones and Bartlett Publishers}, \baddress{Boston}.
\end{bbook}
\endbibitem

\bibitem[\protect\citeauthoryear{Denisov, Dieker and
  Shneer}{2008}]{denisov:dieker:shneer:2008}
\begin{barticle}[author]
\bauthor{\bsnm{Denisov},~\bfnm{D.}\binits{D.}},
  \bauthor{\bsnm{Dieker},~\bfnm{A.~B.}\binits{A.~B.}} \AND
  \bauthor{\bsnm{Shneer},~\bfnm{V.}\binits{V.}}
(\byear{2008}).
\btitle{Large deviations for random walks under subexponentiality: the big jump
  domain}.
\bjournal{Annals of Probability}
\bvolume{36}
\bpages{1946-1991}.
\end{barticle}
\endbibitem

\bibitem[\protect\citeauthoryear{Hult and
  Samorodnitsky}{2008}]{hult:samorodnitsky:2008}
\begin{barticle}[author]
\bauthor{\bsnm{Hult},~\bfnm{H.}\binits{H.}} \AND
  \bauthor{\bsnm{Samorodnitsky},~\bfnm{G.}\binits{G.}}
(\byear{2008}).
\btitle{Tail probabilities for infinite series of regularly varying random
  vectors}.
\bjournal{Bernoulli}
\bvolume{14}
\bpages{838-864}.
\end{barticle}
\endbibitem

\bibitem[\protect\citeauthoryear{Hult et~al.}{2005}]{FunctionalLDP}
\begin{barticle}[author]
\bauthor{\bsnm{Hult},~\bfnm{Henrik}\binits{H.}},
  \bauthor{\bsnm{Lindskog},~\bfnm{Filip}\binits{F.}},
  \bauthor{\bsnm{Mikosch},~\bfnm{Thomas}\binits{T.}} \AND
  \bauthor{\bsnm{Samorodnitsky},~\bfnm{Gennady}\binits{G.}}
(\byear{2005}).
\btitle{{Functional large deviations for multivariate regularly varying random
  walks}}.
\bjournal{The Annals of Applied Probability}
\bvolume{15}
\bpages{2651 -- 2680}.
\bdoi{10.1214/105051605000000502}
\end{barticle}
\endbibitem

\bibitem[\protect\citeauthoryear{Prokhorov}{1959}]{prokhorov:1959}
\begin{barticle}[author]
\bauthor{\bsnm{Prokhorov},~\bfnm{Yu.~V.}\binits{Y.~V.}}
(\byear{1959}).
\btitle{An extremal problem in probability theory}.
\bjournal{Theory of Probability and Its Applications}
\bvolume{4}
\bpages{201-204}.
\end{barticle}
\endbibitem

\bibitem[\protect\citeauthoryear{Resnick}{1987}]{resnick:1987}
\begin{bbook}[author]
\bauthor{\bsnm{Resnick},~\bfnm{S.~I.}\binits{S.~I.}}
(\byear{1987}).
\btitle{Extreme Values, Regular Variation and Point Processes}.
\bpublisher{Springer-Verlag}, \baddress{New York}.
\end{bbook}
\endbibitem

\end{thebibliography}

\end{document}